\newtheorem{problem}{Problem}
\newtheorem{theorem}{Theorem}
\newtheorem{proposition}{Proposition}
\newtheorem{lemma}{Lemma}
\newtheorem{remark}{Remark}
\newcommand{\norm}[1]{\lVert#1\rVert}
\newcommand{\R}{\mathbb{R}}
\newcommand{\Z}{\mathbb{Z}}
\newcommand{\E}{\mathbb{E}}
\newcommand{\diff}{\mathrm{d}}
\newcommand{\cov}[1]{\mathrm{Cov}[#1 ]}
\newcommand{\normfs}[1]{\lVert#1\rVert} 
\let\P\relax
\newcommand{\P}[1]{\mathbb{P}\left[#1\right]}
\newcommand{\ols}[1]{\mskip.5\thinmuskip\overline{\mskip-.5\thinmuskip {#1} \mskip-.5\thinmuskip}\mskip.5\thinmuskip} 
\crefname{align}{Eq.}{Eqs.}
\crefname{equation}{Eq.}{Eqs.}
\crefname{figure}{Fig.}{Figs.}
\crefname{table}{Table}{Tables}
\crefname{theorem}{Theorem}{Theorems}
\crefname{definition}{Definition}{Definitions}
\crefname{lemma}{Lemma}{Lemmas}
\crefname{remark}{Remark}{Remarsks}
\crefname{assumption}{Assumption}{Assumptions}
\crefname{proof}{Proof}{Proofs}
\crefname{algorithm}{Algorithm}{Algorithms}
\crefname{problem}{Problem}{Problems}
\crefname{proposition}{Proposition}{Propositions}
\crefname{corollary}{Corollary}{Corollaries}
\crefname{section}{Section}{Sections}
\def\shortOrFull{2} 
\def\doi{To Be Assigned}
\title{
Chance-Constrained Control for Safe Spacecraft Autonomy: \\ Convex Programming Approach
}
\author{Kenshiro Oguri
\thanks{This work was supported by Purdue University and the U.S. Air Force Office of Scientific Research through FA9550-23-1-0512.
K.~Oguri is with the School of Aeronautics and Astronautics, Purdue University, IN 47907, USA ({\tt koguri@purdue.edu}).
}}
\newcommand\copyrighttext{%
  \scriptsize \textcopyright 2024 IEEE. Personal use of this material is permitted.
  Permission from IEEE must be obtained for all other uses, in any current or future
  media, including reprinting/republishing this material for advertising or promotional
  purposes, creating new collective works, for resale or redistribution to servers or
  lists, or reuse of any copyrighted component of this work in other works.
  DOI: \doi.
  Accepted for 2024 IEEE American Control Conference (ACC).
  }
\newcommand\copyrightnotice{%
\begin{tikzpicture}[remember picture,overlay]
\node[anchor=south,yshift=10pt] at (current page.south) {\fbox{\parbox{\dimexpr\textwidth-\fboxsep-\fboxrule\relax}{\copyrighttext}}};
\end{tikzpicture}%
}
\begin{document}

\maketitle
\thispagestyle{empty}
\pagestyle{empty}

\if\shortOrFull2 
\copyrightnotice
\fi

\begin{abstract}
This paper presents a robust path-planning framework for safe spacecraft autonomy under uncertainty and develops a computationally tractable formulation based on convex programming.
We utilize chance-constrained control to formulate the problem.
It provides a mathematical framework to solve for a sequence of control policies that minimizes a probabilistic cost under probabilistic constraints with a user-defined confidence level (e.g., safety with 99.9\% confidence).
The framework enables the planner to directly control state distributions under operational uncertainties while ensuring the vehicle safety.
This paper
rigorously formulates the safe autonomy problem,
gathers and extends techniques in literature to accommodate key cost/constraint functions that often arise in spacecraft path planning,
and
develops a tractable solution method.
The presented framework is demonstrated via
two representative numerical examples:
safe autonomous rendezvous
and
orbit maintenance in cislunar space,
both under uncertainties due to navigation error from Kalman filter, execution error via Gates model, and imperfect force models.
\end{abstract}



\section{Introduction}
\label{sec:intro}

Safety is crucial in spacecraft autonomy.
As any space vehicles must operate under various operational uncertainties, safety assurance under such uncertainties is prerequisite for any autonomous guidance navigation control (GNC) algorithms to be deployed on real-world space vehicles. 
Notable uncertainties in spacecraft GNC include
navigation (estimation) error,
maneuver execution error,
and
imperfect force models.
It is a challenging task to ensure the safety of autonomous vehicles under such uncertainties with severe constraints in space operations, such as
the limited onboard computation,
communication bandwidth,
and
stringent safety constraints (e.g., keep-out zone, approach cone, tube/box constraints about nominal trajectories)
\cite{Starek2016a}.

To address the challenge, this paper leverages recent advances in chance-constrained control
\cite{Ono2013,Okamoto2018a,Ridderhof2020}.
Chance-constrained control is a class of stochastic control that seeks a sequence of control policies that minimizes a probabilistic cost while imposing probabilistic constraints with a user-defined confidence level.
A chance constraint is defined as:
\begin{align} \begin{aligned}
\P{\mathrm{safe}} \geq 1 - \varepsilon,\quad
\label{eq:SOCconstraints}
\end{aligned} \end{align}
where 
$\varepsilon$ is a risk bound (e.g., $\varepsilon=0.01$ for $99\%$ confidence).
Chance-constrained approaches directly control and impose safety constraints on state distributions under uncertainty, 
which is in sharp contrast to conventional GNC algorithms,
such as those based on Gauss equations 
\cite{Gaias2015b}, 
convex programming
\cite{Acikmese2007,Lu2013}, 
predictor-corrector
\cite{Ito2020,Lu2008}, 
sliding-mode control
\cite{Furfaro2013a}, 
and 
model predictive control (MPC)
\cite{Weiss2015a,DiCairano2012}.
A key advantage over other robust control approaches, such as robust MPC
\cite{Kothare1996,Kuwata2007,Buckner2018,Oestreich2023}, 
lies in its ability to handle \textit{un}bounded distributions, which are common in spacecraft GNC, e.g., Gaussian-distributed state estimate from a Kalman filter.

\cref{f:framework} illustrates the safe autonomy framework envisioned in this study.
In this framework, a chunk of chance-constrained control policies, $\pi_{1:N}$ ($N$: planning horizon), are computed on ground via convex programming, infrequently uploaded to spacecraft, and run on-board to calculate maneuvers every time a new state estimate becomes available via filtering.
Maneuvers derived from the policies are safe by design under uncertainties that are modeled in the planner.

\begin{figure}[tb]
\centering \includegraphics[width=0.9\linewidth]{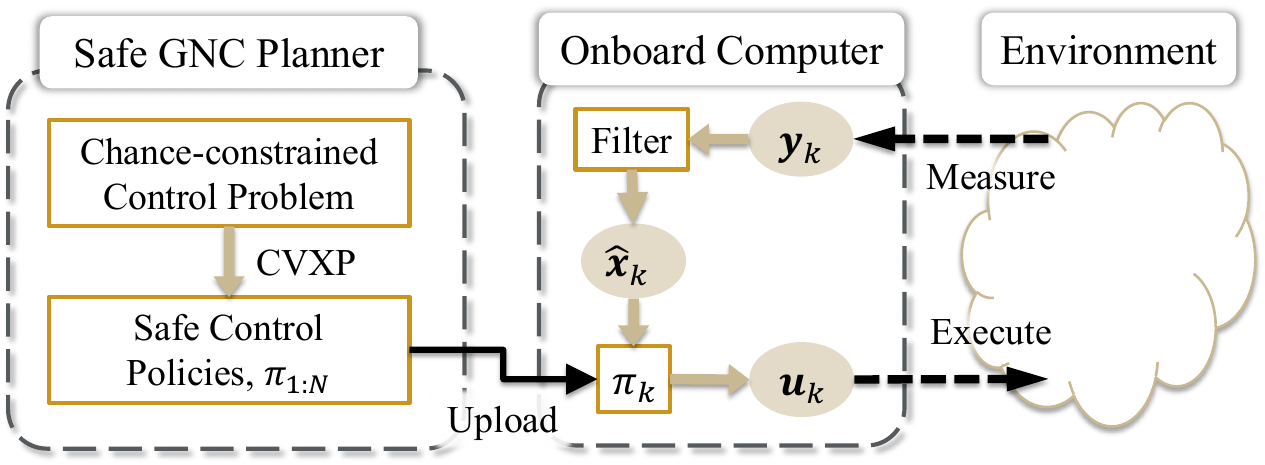}
\vspace{-5pt}
\caption{\label{f:framework} Safe autonomy framework based on chance-constrained control}
\vspace{-15pt}
\end{figure}

The main contributions of this paper are threefold.
First,
this paper rigorously formulates the safe spacecraft autonomy concept envisioned in \cref{f:framework} as an output-feedback chance-constrained control problem.
Second, we extend techniques in literature to incorporate notable cost function and constraints in spacecraft GNC and to exploit the Markovian property of the system.
Third, this paper demonstrates the autonomy framework via
two representative scenarios:
safe autonomous rendezvous
and
station-keeping on a cislunar near rectilinear halo orbit (NRHO).

\subsubsection*{Notation}
$\Z_{0}^{N}$ denotes the set of integers from $0$ to $N$.
$\norm{\cdot}_p $ denotes $l_p$ norm for a vector.
For a matrix $A$, $\norm{A}_2 $ is its spectral norm.
For a matrix $A \succeq 0$, $A^{1/2}$ is lower-triangular and satisfies $A = A^{1/2} [A^{1/2}]^\top $, and $\lambda_{\max}(A) (=\norm{A^{1/2}}_2^2) $ is the largest eigenvalue of $A$.
$\P{\cdot}$, $\E{[\cdot]}$, and $\cov{\cdot}$ are probability, expectation, and covariance operators.


\section{Orbital Mechanics under Control}
\label{sec:EoM}

\subsection{Generic Formulation}
Let $\bm{x}\in\R^{n_x} $ be the spacecraft orbital state and $\bm{u}\in\R^{n_u} $ be the control input.
In general, spacecraft orbital dynamics under control are expressed as a control-affine system:
\begin{align} \begin{aligned}
\dot{\bm{x}} =& \bm{f}(\bm{x}, \bm{u}, t) = 
\bm{f}_0(\bm{x},t)
+
B\bm{u} ,\quad 
B = 
\begin{bmatrix}
0_{n_u\times n_u} \\ I_{n_u}
\end{bmatrix}
\label{eq:genericEoM}
\end{aligned} \end{align}
where $\bm{f}_0(\cdot) $ represents orbital dynamics under no control.
The control input $\bm{u}$ may model impulsive maneuvers (delta-V) or continuous acceleration.
We characterize $\bm{u}(t)$ by a finite number of control inputs $\bm{u_k}, k\in\Z_{0}^{N-1}$ as:
\begin{align} \begin{aligned}
\bm{u}(t) = 
\begin{cases}
\sum_{k=0}^{N-1} \bm{u}_k \delta(t - t_k) & \text{(impulsive)} \\
\bm{u}_k,\quad \forall t\in[t_k,t_{k+1}) & \text{(continuous)}
\end{cases}
\label{eq:control}
\end{aligned} \end{align}
where $\delta(\cdot)$ is the Dirac delta function, and the zero-order-hold (ZOH) is assumed for continuous control.

\if\shortOrFull1 
\input{2-1-short}
\fi

\if\shortOrFull2 
\subsection{Specific Formulations}
Appropriate equations of motion (EoMs) to use depend on the orbital regime and operation scenario.
Let us review some popular forms of $\bm{f}_0(\cdot)$ with underlying assumptions.

\subsubsection{Perturbed 2BP}
The perturbed two-body problem (2BP) expressed in Cartesian coordinates is a common dynamical system in spacecraft GNC.
The state is defined as $\bm{x} = [\bm{r}^\top, \bm{v}^\top]^\top $, where $\bm{r},\bm{v}$ are the spacecraft position and velocity,
and $\bm{f}_0(\cdot)$ is given by \cite{Battin1999}:
\begin{align} \begin{aligned}
\bm{f}_0(\bm{x},t) = 
\begin{bmatrix}
\bm{v} \\ -\frac{\mu}{\norm{\bm{r}}_2^3}\bm{r}
\end{bmatrix}
+ B\bm{a}(\bm{x}, t)
\label{eq:keplerianCartesian}
\end{aligned} \end{align}
where
$\mu$ is the gravitational parameter of the body, and $\bm{a}$ is perturbing acceleration due to the surrounding environment.

\subsubsection{CWH equation}
Clohessy-Wiltshire Hill (CWH) equation approximates \cref{eq:keplerianCartesian} to model orbital motion in proximity of another object (called \textit{chief} satellite) .
CWH equation is derived with assumptions that
(i) the chief and our spacecraft is sufficiently close, and
(ii) the chief orbit is circular about the gravitational body.
The state is defined as $\bm{x} = [\bm{r}^\top, \bm{v}^\top]^\top $, where $\bm{r},\bm{v}\in\R^3$ are the spacecraft position and velocity in a rotating frame that rotates with the chief orbit about the gravitational body.
$\bm{f}_0(\cdot)$ is given by \cite{Alfriend2010}:
\begin{align} \begin{aligned}
&\bm{f}_0(\bm{x},t) = A\bm{x} + B\bm{a}(\bm{x},t)
,\quad
A = 
\begin{bmatrix}
0_{3\times 3} & I_3 \\
A_{21} & A_{22}
\end{bmatrix}
,\\ 
&A_{21} = 
\begin{bmatrix}
3 n^2 & 0 & 0 \\
0 & 0 & 0 \\
0 & 0 & -n^2
\end{bmatrix}
,\quad 
A_{22} = 
\begin{bmatrix}
0 & 2n & 0 \\
-2n & 0 & 0 \\
0 & 0 &  0
\end{bmatrix}
\label{eq:CWH}
\end{aligned} \end{align}
where
$n = \sqrt{{\mu}/{r_0^3}}$, and $r_0$ is the chief orbit radius.

\subsubsection{CR3BP}
Circular Restricted Three-Body Problem (CR3BP) is the simplest possible expression of the three-body problem.
It assumes that
two massive bodies with gravitational parameters $\mu_1$ and $\mu_2$ are in a circular orbit about their barycenter.
The state vector is defined as $\bm{x} = [\bm{r}^\top, \bm{v}^\top]^\top $, where $\bm{r} = [x, y, z]^\top$ and $\bm{v} = [\dot{x}, \dot{y}, \dot{z}]^\top $ are \textit{non-dimensional} spacecraft position and velocity in the rotating frame that rotates with the two massive bodies.
$\bm{r}$ and $\bm{v}$ can be dimensionalized by multiplying the characteristic length $l_*$ and velocity $v_*$, where $l_*$ is the distance between the two massive bodies, $v_* = l_* / t_*$, and $t_*$ (characteristic time) is given by $t_* = \sqrt{l_*^3 / (\mu_1 + \mu_2)} $.
$\bm{f}_0(\cdot)$ is given by \cite{Szebehely1967a}:
\begin{align} 
&\bm{f}_0(\bm{x},t) = 
\begin{bmatrix}
\bm{v} \\
\bm{f}_a(\bm{x})
\end{bmatrix}
+ 
B\bm{a}
\label{eq:CR3BP}
\end{align}
$\bm{f}_a(\cdot)$ is given in \cref{eq:CR3BP2}, where
$\mu = \mu_2 / (\mu_1 + \mu_2)$,
$r_1 = \sqrt{(x+\mu)^2 + y^2 + z^2} $, and $r_2 = \sqrt{(x-1+\mu)^2 + y^2 + z^2} $.
\begin{align} 
\bm{f}_a(\bm{x}) =
\begin{bmatrix}
2 \dot{y}+ x - \frac{(1-\mu)(x+\mu)}{r_1^3}-\frac{\mu(x-1+\mu)}{r_2^3} \\
- 2 \dot{x} + y - \frac{(1-\mu) y}{r_1^3}-\frac{\mu y}{r_2^3}\\
\frac{-(1-\mu) z}{r_1^3}-\frac{\mu z}{r_2^3}
\end{bmatrix}
\label{eq:CR3BP2}
\end{align}

\fi

\section{Chance-Constrained Control Problem}
\label{sec:formulation}

\subsection{Uncertainty Modeling}
\label{sec:uncertaintyModel}

\subsubsection{Initial state dispersion}
We model the distributions of $\hat{\bm{x}} $ (estimate of $\bm{x}$) and $\widetilde{\bm{x}}(= \bm{x} - \hat{\bm{x}} )$ at $t_0$ via independent Gaussian distributions as
\begin{align}
\hat{\bm{x}}_0^{-} \sim\mathcal{N}(\ols{\bm{x}}_0, \hat{P}_{0^-}),\  
\widetilde{\bm{x}}_0 \sim\mathcal{N}(0, \widetilde{P}_{0^-}),
\end{align}
which implies $\bm{x}_0 = \hat{\bm{x}}_0^{-} + \widetilde{\bm{x}}_0 \sim\mathcal{N}(\ols{\bm{x}}_0, \hat{P}_{0^-} + \widetilde{P}_0)$.

\subsubsection{Maneuver execution error}
The Gates model \cite{Gates1963} is a common approach to execution error modeling.
For a given $\bm{u}_k$, it models the control error, $\widetilde{\bm{u}}_k$, as \cite{Oguri2021e,Kumagai2023a}:
\begin{align}\begin{aligned}
&
\widetilde{\bm{u}}(t) = 
\begin{cases}
\sum_{k=0}^{N-1} \widetilde{\bm{u}}_k \delta(t - t_k) & \text{(impulsive)} \\
\widetilde{\bm{u}}_k,\forall t\in[t_k,t_{k+1}) & \text{(continuous)}
\end{cases},\\
&
\widetilde{\bm{u}}_k = G_{\mathrm{exe}}(\bm{u}_k) \bm{w}_{\mathrm{exe},k},\quad  
\bm{w}_{\mathrm{exe},k} \sim \mathcal{N}(0, I_3),
\\
&
G_{\mathrm{exe}}(\bm{u}_k) = T(\bm{u}_k) P_\mathrm{gates}^{1/2}(\bm{u}_k)
\label{eq:executionError}
\end{aligned}\end{align}
where
$\bm{w}_{\mathrm{exe},k} $ are i.i.d.~standard Gaussian vectors, and
\begin{align}\begin{aligned}
&
P_\mathrm{gates}(\cdot) =
\mathrm{diag}(\sigma_p^2, \sigma_p^2,  \sigma_m^2 )
,\ 
T(\cdot) = 
\begin{bmatrix}
\hat{\bm{S}} & \hat{\bm{E}} & \hat{\bm{Z}}
\end{bmatrix}
,\\
&
\hat{\bm{Z}} = \frac{\bm{u}_k}{\norm{\bm{u}_k}_2} 
,\ 
\hat{\bm{E}} = \frac{[0,0,1]^\top \times \hat{\bm{Z}}}{\norm{[0,0,1]^\top \times \hat{\bm{Z}}}_2}
,\ 
\hat{\bm{S}} = \hat{\bm{E}} \times \hat{\bm{Z}}
\label{eq:GatesModel}
\end{aligned}\end{align}
Here, 
$\sigma_p^2 = \sigma_3^2 + \sigma_4^2 \norm{\bm{u}_k}_2^2 $ and $\sigma_m^2 = \sigma_1^2 + \sigma_2^2 \norm{\bm{u}_k}_2^2$.
Note that $\{\sigma_1, \sigma_2\} $ are \{fixed, proportional\} magnitude errors while $\{\sigma_3, \sigma_4\}$ are \{fixed, proportional\} pointing errors.

\subsubsection{Stochastic acceleration}
Stochastic acceleration due to imperfect force modeling can be naturally modeled by a Brownian motion: $G(\bm{x})\diff\bm{w}(t)$, where $G(\cdot) $ is the intensity of disturbances;
$\diff\bm{w}(t) $ is a standard Brownian motion vector, i.e., $\E{[\diff\bm{w}]}=0_3 $ and $\E{[\diff\bm{w}(t)\diff\bm{w}^\top(t)]} = I_{3} \diff t  $.

\subsubsection{Navigation uncertainty}
In essence, navigation is a filtering process with discrete observations, modeled as:
\begin{align}\begin{aligned}
\bm{y}_k =& \bm{f}_{\mathrm{obs}}(\bm{x}_k) + G_{\mathrm{obs}}(\bm{x}_k) \bm{w}_{\mathrm{obs},k},
\label{eq:SOCobservations}
\end{aligned}\end{align}
where
$\bm{f}_{\mathrm{obs}}(\cdot) $ and $G_{\mathrm{obs}}(\cdot) $ are the measurement function and noise intensity, respectively;
$\bm{w}_{\mathrm{obs},k}\in\R^{n_y} $ is an i.i.d. standard Gaussian vector.
A navigation solution at time $t_k$, denoted by $\hat{\bm{x}}_k$, is expressed in terms of its probability density function (pdf) conditioned on all the past measurements $\bm{y}_{1:k}$:
\begin{align}\begin{aligned}
&\hat{\bm{x}}_k \sim f_\mathrm{pdf}(\bm{x}_k | \bm{y}_{1:k})=
\mathcal{F}(\bm{x}_k, \bm{y}_{1:k})
\label{eq:SOCfiltering}
\end{aligned}\end{align}
where $\mathcal{F}(\cdot)$ denotes the filtering process.
Filtering typically utilizes the innovation process $\widetilde{\bm{y}}_k^{-}$, defined as:
\begin{align}\begin{aligned}
\widetilde{\bm{y}}_k^{-} = \bm{y}_k - \bm{f}_{\mathrm{obs}}(\hat{\bm{x}}_k^-)
\label{eq:innovationProcess}.
\end{aligned}\end{align}

\subsubsection{Nonlinear Stochastic System}
The evolution of stochastic state is naturally modeled by nonlinear stochastic differential equation (SDE)
as:
\begin{align}\begin{aligned}
\diff\bm{x} =& 
[\bm{f}(\bm{x}, \bm{u}, t) + B\widetilde{\bm{u}} ] \diff t + G(\bm{x})\diff\bm{w}(t)
\label{eq:SDE}
\end{aligned}\end{align}

\subsection{Orbit Control under Uncertainty}

\subsubsection{Cost Function}

In stochastic settings, the classical form $J = \int_{t_0}^{t_f} L(\bm{u}) \diff t $ ($L$: Lagrangian cost) is not well defined because $\bm{u}$, hence $L(\cdot)$, may be now stochastic.
Instead, we minimize the integral of the $p$-quantile of $L(\cdot)$, i.e.,
\begin{align} \begin{aligned}
J = \int_{t_0}^{t_f} Q_{L(\bm{u})}(p) \diff t
\label{eq:SOCobjective99fuel}
\end{aligned} \end{align}
where 
$Q_{X}(p)$ is the quantile function of a random variable $X$ evaluated at probability $p$, formally defined as:
\begin{align}
Q_{X}(p) = \min\{a\in\R \mid \P{X \leq a} \geq p\}.
\label{eq:defQuantile}
\end{align}
This paper is focused on 
minimizing $\Delta V_{99}$, i.e., $99\%$ quantile of fuel cost, corresponding to $L(\cdot) = \norm{\bm{u}}_2$ with $p=0.99$.

\subsubsection{Path chance constraints}
As our state and control are subject to uncertainty, constraints are not deterministic anymore and need to be treated probabilistically.
We use chance constraints \cref{eq:SOCconstraints} to replace classical path constraints.

This paper is focused on chance constraints imposed at discrete epochs, although there are studies that extend the concept to continuous-time chance constraints \cite{Oguri2019b}.

A simple yet versatile form of state chance constraints is an intersection of hyperplane constraints:
\begin{align} \begin{aligned}
\P{\bm{a}_j^\top \bm{x}_k + b_j \leq 0 ,\ \forall j} \geq 1 - \varepsilon_{x},\  k\in\Z_{0}^{N}
\label{eq:stoStateConst}
\end{aligned} \end{align}
which can conservatively represent any convex feasible regions, including box constraints about a reference trajectory.

If a tube-like constraint about a reference trajectory is preferred over box constraints, we could also consider
\begin{align} \begin{aligned}
\P{\norm{H(\bm{x}_k - \bm{x}_k^*)}_2 \leq d_{\max} } \geq 1 - \varepsilon_x,\ k\in\Z_{0}^{N}
\label{eq:stoStateConstTube}
\end{aligned} \end{align}
where 
$H\in\R^{n_h\times n_x} $ extracts specific elements from $\bm{x}$ (e.g., $H = [I_3, 0_{3\times3}] $ to extract position),
$\bm{x}_k^* $ is the reference state at the epoch, and $d_{\max}$ is the state deviation bound.

Let us then consider control chance constraints.
A common control constraint is the magnitude constraint, $\norm{\bm{u}_k}_2 \leq u_{\max} $, whose chance-constraint counterpart is given by:
\begin{align} \begin{aligned}
\P{\norm{\bm{u}_k}_2 \leq u_{\max} } \geq 1 - \varepsilon_u,\ k\in\Z_{0}^{N-1}
\label{eq:stoControlConst}
\end{aligned} \end{align}
In addition, constraints on control change rate may be crucial when the time for an attitude maneuver is a bottleneck to thrust direction change;
such a constraint is given by:
\begin{align} \begin{aligned}
\P{\norm{\bm{u}_{k+1} - \bm{u}_k}_2 \leq \Delta u_{\max} } \geq 1 - \varepsilon_u,\ k\in\Z_{0}^{N-2}
\label{eq:stoControlRateConst}
\end{aligned} \end{align}

\subsubsection{Distributional terminal constraints}
A natural extension of deterministic terminal constraints would ensure the spacecraft to nominally arrive at the target $\bm{x}_f$ within some prescribed accuracy represented by the final covariance $P_f$:
\begin{align} \begin{aligned}
\E[{\bm{x}}_N] - \bm{x}_f = 0,\quad P_N \preceq P_f
\label{eq:SOCterminal}
\end{aligned} \end{align}
where
$P_N$ denotes the covariance of the $N$-th state.

\subsubsection{State-triggered chance constraints}
State-triggered constraints may arise in many spacecraft GNC problems, especially when we have multiple phases in a mission.
In deterministic form, a state-triggered constraint (STC) is expressed as:
if $g_{\mathrm{stc}}(\bm{x}, \bm{u}) < 0$, then $c_{\mathrm{stc}}(\bm{x}, \bm{u}) \leq 0 $.
This STC is shown in \cite{Szmuk2020} to be logically equivalent to 
\begin{align}
h_{\mathrm{stc}}(\bm{x}, \bm{u}) \triangleq - \min\{g_{\mathrm{stc}}(\bm{x}, \bm{u}), 0 \}\cdot c_{\mathrm{stc}}(\bm{x}, \bm{u}) \leq 0
\label{eq:stateTriggered}
\end{align}
The equivalence can be understood by noting that \cref{eq:stateTriggered} implies that, if $g_{\mathrm{stc}}(\cdot) < 0 $, then $h_{\mathrm{stc}}(\cdot) = - g_{\mathrm{stc}}(\cdot) c_{\mathrm{stc}}(\cdot) \leq 0 $, and hence $c_{\mathrm{stc}}(\cdot) \leq 0$;
on the other hand, if $g_{\mathrm{stc}}(\cdot) \geq 0 $, then $h_{\mathrm{stc}}(\cdot) = 0$ regardless of the value of $c_{\mathrm{stc}}(\cdot)$.

An extension of this concept to a chance constraint with risk level $\epsilon$ can be expressed as:
\begin{align}
\text{if } 
\E[g(\bm{x}, \bm{u})] < 0
,\text{ then }
\P{c(\bm{x}, \bm{u}) \leq 0} \geq 1 - \epsilon
\label{eq:stateTriggeredCC}
\end{align}
which means that $\P{c(\bm{x}, \bm{u}) \leq 0} \geq 1 - \epsilon$ is imposed when the trigger condition $g(\bm{x}, \bm{u}) < 0$ is satisfied in expectation.

In this paper, we consider an approach-cone constraint that is triggered when our satellite is near the origin (e.g., chief satellite).
An approach cone can be represented by a second-order cone $\norm{A_{\mathrm{cone}} \bm{r}}_2 \leq \bm{b}_{\mathrm{cone}}^\top \bm{r}$;
for instance, if the satellite is allowed to approach the chief from ${+}y$ direction, $\sqrt{r_x^2 + r_z^2} \leq r_y \tan{\theta_{\max}} $, where $\theta_{\max}$ is half of the cone angle.
Thus,
we can specialize \cref{eq:stateTriggeredCC} as follows:
\begin{align}\begin{aligned}
&\text{if} 
&&\E\left[\norm{H_r\bm{x}_k}_2 \right]  < r_{\mathrm{trigger}}
,
\\
&
\text{then}
&&\P{\norm{A_{\mathrm{cone}} H_r \bm{x}_k}_2 \leq \bm{b}_{\mathrm{cone}}^\top H_r \bm{x}_k } \geq 1 - \epsilon_{x}
\label{eq:stateTriggeredCCApproachCone}
\end{aligned}\end{align}
where
$H_r\in\R^{3 \times n_x} $ extracts the position vector from $\bm{x}$, and
$r_{\mathrm{trigger}}$ is the critical radius about the origin.

\subsubsection{Control policy}
Since spacecraft never has access to the perfect state knowledge, control policies must calculate maneuvers using imperfect navigation solutions from \cref{eq:SOCfiltering}.
We model the control policy generically as:
\begin{align}\begin{aligned}
\bm{u}_k = \pi_k(\hat{\bm{x}}_k; \Omega_k),
\label{eq:SOCPolicy}
\end{aligned}\end{align}
where
$\Omega_k$ is a set of parameters that parameterize $\pi_k$.

\subsection{Original Chance-Constrained Orbit Control Problem}
\begin{problem}
\label{prob:originalSOC}
Find $\pi_k(\cdot),\ k\in\Z_0 ^{N-1}$ that minimize \cref{eq:SOCobjective99fuel} while obeying the SDE \cref{eq:SDE} and satisfying the chance constraints \cref{eq:stoStateConst,eq:stoStateConstTube,eq:stoControlConst,eq:stoControlRateConst}, terminal constraint \cref{eq:SOCterminal}, and state-triggered chance constraints \cref{eq:stateTriggeredCCApproachCone} under control policies given by \cref{eq:SOCPolicy} with the state estimates \cref{eq:SOCfiltering}.
\end{problem}

\section{Solution Method via Convex Programming}

\subsection{Linear State Statistics Dynamics}
\label{sec:linearDynamics}

\subsubsection{Linear, discrete-time system}
Linearizing \cref{eq:SDE} about the reference state $\bm{x}^*(t) $ and control $\bm{u}^*(t) $, we have
\begin{align}
& \diff\bm{x} = 
(A \bm{x} + B^* \bm{u} + \bm{c} + B^*\widetilde{\bm{u}}^* )\diff t + G^* \diff\bm{w}(t),
\nonumber
\\
&A =
\left(
{\partial \bm{f}}/{\partial \bm{x}} 
\right)^*
,\ 
\bm{c} =
\left(
 \bm{f} - A \bm{x} - B \bm{u}
\right)^*
\label{eq:SDElin}
\end{align}
where 
$(\cdot)^*$ indicates the evaluation at $\bm{x}^*(t), \bm{u}^*(t)$.

Integrating \cref{eq:SDElin} over an interval $[t_k,t_{k+1})$ yields
\begin{align}\begin{aligned}
\hspace{-3pt}
\bm{x}_{k+1} = A_k\bm{x}_k + B_k \bm{u}_k + \bm{c}_k + G_{\mathrm{exe}, k} \bm{w}_{\mathrm{exe}, k}  + G_{k} \bm{w}_{k} 
\label{eq:SDElinDiscrete}
\end{aligned}\end{align}
where $\bm{w}_{\mathrm{exe}, k} \sim\mathcal{N}(0, I_{n_u} )$ and $\bm{w}_k \sim\mathcal{N}(0, I_{n_x} ) $.
The system matrices $A_k, B_k, \bm{c}_k, G_{\mathrm{exe}, k} $ are given by:
\begin{align}
&A_k = \Phi(t_{k+1}, t_k)
\nonumber
,\\
&
B_k = 
\begin{cases}
A_k B^*(t_k) & \text{(impulsive)} \\
A_k \int_{t_k}^{t_{k+1}}\Phi^{-1}(t, t_k) B^*(t) \diff t & \text{(continuous)}
\end{cases}
\label{eq:ABc_k}
,\\
&
\bm{c}_k = A_k \int_{t_k}^{t_{k+1}}\Phi^{-1}(t, t_k) \bm{c}(t) \diff t
,\  
G_{\mathrm{exe}, k} = B_k G_{\mathrm{exe}}(\bm{u}_k^*)
\nonumber
\end{align}
while $G_{k}$ is any matrix such that $G_{k} \bm{w}_k $ has covariance
\begin{align}\begin{aligned}
A_k \left\{
\int_{t_k}^{t_{k+1}} \Phi^{-1}(t, t_k) G(t) [\Phi^{-1}(t, t_k) G(t)]^\top \diff t
\right\} A_k^\top
\nonumber
\end{aligned}\end{align}
Note that $\Phi(t, t_k)$ denotes a state transition matrix from $t_k$ to $t$, obtained by solving the ordinary differential equation:
\begin{align}\begin{aligned}
\dot{\Phi}(t, t_k) = A(t)\Phi(t, t_k),
\quad
\Phi(t_k, t_k) = I
\label{eq:STM}
\end{aligned}\end{align}

If $\bm{u}_k^*=0$, then $T(\bm{u}_k^*) $ in \cref{eq:GatesModel} is undefined and so is $G_{\mathrm{exe}}(\bm{u}_k^*)$;
in such a case, we model it as $T(\bm{u}_k^*) = I_3 $.

\subsubsection{Filtered state dynamics}
We assume that the spacecraft is equipped with a Kalman filter to calculate navigation solutions onboard.
Hence, \cref{eq:SOCobservations} is approximated as:
\begin{align}
&\bm{y}_k = C_k \bm{x}_k + D_k\bm{w}_{\mathrm{obs}, k} + \bm{c}_{\mathrm{obs},k}
,\ \mathrm{with}\ 
C_k = [{\partial \bm{f}_{\mathrm{obs}}}/{\partial \bm{x}}]^*
,
\nonumber
\\ 
&D_k = G_{\mathrm{obs}} (\bm{x}_k^*)
,\ 
\bm{c}_{\mathrm{obs},k} = \bm{f}_{\mathrm{obs}}(\bm{x}_k^*) - C_k \bm{x}_k^*
\label{eq:linOBS}
\end{align}
Likewise, $\bm{f}_{\mathrm{obs}}(\hat{\bm{x}}_k^-) \approx \bm{c}_{\mathrm{obs},k} + C_k\hat{\bm{x}}_k^-$, leading \cref{eq:innovationProcess} to
\begin{align}\begin{aligned}
\widetilde{\bm{y}}_k^{-} = \bm{y}_k - (\bm{c}_{\mathrm{obs},k} + C_k\hat{\bm{x}}_k^-) 
= C_k\widetilde{\bm{x}}_k + D_k\bm{w}_{\mathrm{obs}, k}
\label{eq:lin_innovation}
\end{aligned}\end{align}
whose distribution is derived as (with $\widetilde{P}_{k}\triangleq\cov{\widetilde{\bm{x}}_k}$):
\begin{align}\begin{aligned}
\widetilde{\bm{y}}_k^{-} \sim \mathcal{N}(0, P_{\widetilde{y}_k^-}),\ 
P_{\widetilde{y}_k^-} = C_k \widetilde{P}_k^{-} C_k^\top + D_k D_k^\top
\label{eq:innovationCovariance}
\end{aligned}\end{align}
The Kalman filter sequentially updates the state estimate as:
\begin{align}
\hat{\bm{x}}_{k}^{-} &= A_{k-1}\hat{\bm{x}}_{k-1} + B_{k-1} \bm{u}_{k-1} + \bm{c}_{k-1} 
\quad  (\mathrm{time\ update})
\nonumber
\\
\hat{\bm{x}}_{k} &= \hat{\bm{x}}_{k}^{-} + L_k\widetilde{\bm{y}}_k^{-}
\quad  (\mathrm{measurement\ update})
\label{eq:KF}
\end{align}
which can be combined to yield
\begin{align}\begin{aligned}
\hat{\bm{x}}_{k+1} = A_k\hat{\bm{x}}_k + B_k \bm{u}_k + \bm{c}_k + L_{k+1} \widetilde{\bm{y}}_{k+1}^{-} 
\label{eq:linEstProcess}
\end{aligned}\end{align}
where $\hat{\bm{x}}_0 = \hat{\bm{x}}_0^{-} + L_0 \widetilde{\bm{y}}_0^-$.
Here, $L_k$ is the Kalman gain:
\begin{align}\begin{aligned}
L_k = \widetilde{P}_k^{-}C_k^\top(C_k \widetilde{P}_k^{-} C_k^\top + D_k D_k^\top)^{-1}
\end{aligned}\end{align}
which can be analytically calculated \textit{a priori} in linear Kalman filter
since $\widetilde{P}_{k}$ and $\widetilde{P}_{k}^{-}$ are also available \textit{a priori}:
\begin{align}
\hspace{-5pt}
\widetilde{P}_{k}^{-} &= A_{k-1}\widetilde{P}_{k-1}A_{k-1}^\top + G_{\mathrm{exe}, k-1}G_{\mathrm{exe}, k-1}^\top + G_{k-1}G_{k-1}^\top 
\nonumber
\\
\widetilde{P}_{k} 	&= (I - L_k C_k) \widetilde{P}_{k}^{-} (I - L_k C_k)^\top + L_kD_k D_k^\top L_k^\top
\label{eq:estimateCovariance}
\end{align}

\subsubsection{Linear output-feedback control policy}
We model the control policy \cref{eq:SOCPolicy} in a linear output feedback form as:
\begin{align}\begin{aligned}
\bm{u}_k &= \ols{\bm{u}}_k + K_{k}\bm{z}_k, \ k\in\Z_0^{N-1}
\label{eq:SOClinPolicy}
\end{aligned}\end{align}
where $\ols{\bm{u}}_k  $ is a nominal control input, $K_k$ is a feedback gain matrix, and $\bm{z}_k$ is a stochastic process given by
\begin{align}\begin{aligned}
\bm{z}_{k+1} = A_k\bm{z}_k + L_{k+1}\widetilde{\bm{y}}_{k+1}^-,
\quad
\bm{z}_0 = \hat{\bm{x}}_0 - \ols{\bm{x}}_0.
\label{eq:z-process}
\end{aligned}\end{align}
This approach, originally proposed by the author and collaborators in \cite{Aleti2023}, exploits the system's Markovian property.

\subsubsection{State and control statistics}
Let us first analytically derive the statistics---mean and covariance---of the state and control.
We express \cref{eq:linEstProcess} in a block-matrix form as:
\begin{align}\begin{aligned}
&
\begin{bmatrix}
\hat{\bm{x}}_0 \\
\hat{\bm{x}}_1 \\
\hat{\bm{x}}_2 \\
\vdots
\end{bmatrix}
=
\begin{bmatrix}
I_{n_x} \\
A_0 \\
A_1 A_0 \\
\vdots
\end{bmatrix}
\hat{\bm{x}}_0^{-}
+
\begin{bmatrix}
0   & 0 & \\
B_0                 & 0 & \\
A_1 B_0             & B_1               & \\
					&                   & {\ddots}
\end{bmatrix}
\begin{bmatrix}
\bm{u}_0 \\
\bm{u}_1 \\
\bm{u}_2 \\
\vdots
\end{bmatrix}
+
\\
&
\begin{bmatrix}
0   & 0 & \\
I_{n_x} 		 	& 0 & \\
A_1                 & I_{n_x} 			& \\
					&                   & {\ddots}
\end{bmatrix}
\begin{bmatrix}
\bm{c}_0 \\
\bm{c}_1 \\
\bm{c}_2 \\
\vdots
\end{bmatrix}
+
\begin{bmatrix}
L_0  				& 0 & \\
A_0L_0  			& L_1 				& \\
A_1A_0L_0 			& A_1L_1			& \\
					&                   & {\ddots}
\end{bmatrix}
\begin{bmatrix}
\widetilde{\bm{y}}_0^{-} \\
\widetilde{\bm{y}}_1^{-} \\
\widetilde{\bm{y}}_2^{-} \\
\vdots
\end{bmatrix}
\nonumber
\end{aligned}\end{align}
which can be expressed in a compact form as:
\begin{align}\begin{aligned}
\hat{\bm{X}} = 
\mathbf{A}\hat{\bm{x}}_0^{-} + 
\mathbf{B}\bm{U} + 
\mathbf{C} + 
\mathbf{L}{\bm{Y}},
\label{eq:blockMatrixEq}
\end{aligned}\end{align}
where 
$\hat{\bm{X}} = [\hat{\bm{x}}_0^\top, \hat{\bm{x}}_1^\top, ..., \hat{\bm{x}}_N^\top]^\top $,
$\bm{U} = [\bm{u}_0^\top, \bm{u}_1^\top, ..., \bm{u}_{N-1}^\top]^\top $,
$\bm{Y} = [\widetilde{\bm{y}}_0^{-1^\top}, \widetilde{\bm{y}}_1^{-1^\top}, ..., \widetilde{\bm{y}}_{N}^{-1^\top}]^\top $,
and $\mathbf{A},\mathbf{B},\mathbf{C}, \mathbf{L}$ are defined accordingly.
See \cite{Okamoto2018a,Ridderhof2020} on this construction.
Here, we define matrices $E_{x_k}$ ($E_{u_k} $) to extract $\bm{x}_k$ ($\bm{u}_k$) from $\bm{X} $ ($\bm{U} $) as:
\begin{align}\begin{aligned}
&\bm{x}_k = E_{x_k} \bm{X},\ 
\bm{u}_k = E_{u_k} \bm{U}.
\label{eq:extractMatrix}
\end{aligned}\end{align}

Under the control policy \cref{eq:SOClinPolicy}, $\bm{U}$ is given by:
\begin{align}\begin{aligned}
&\bm{U} = \ols{\bm{U}} + \mathbf{K}\bm{Z}
,\ 
\mathbf{K} =
\begin{bmatrix}
K_{0}  	& 0 	& 0 	& \hdots & 0\\
0  		& K_1 	& \ddots 		& \hdots & 0\\
\vdots 			& \vdots 		& \ddots 		& 0 & 0 \\
0  		& 0 	& \hdots 		& K_{N-1} & 0\\
\end{bmatrix}
\nonumber
\end{aligned}\end{align}
where 
$\bm{Z} = [\bm{z}_0^\top, \bm{z}_1^\top, ..., \bm{z}_{N}^\top]^\top $ is calculated as:
\begin{align}\begin{aligned}
&\bm{Z} = \mathbf{A}\bm{z}_0 + \mathbf{L}_Z\bm{Y}
\\
&\mathbf{L}_Z =
\begin{bmatrix}
0 & 0  				& 0 & 0 & \\
0 & L_1  			& 0 	& 0			& \\
0 & A_1L_1 			& L_2			& \\
0 & A_2A_1L_1 		& A_2L_2	& L_3		& \\
					&          & &         & {\ddots}
\end{bmatrix}
\label{eq:z-processBlock}
\end{aligned}\end{align}
Hence, \cref{eq:blockMatrixEq} under \cref{eq:SOClinPolicy} is expressed as:
\begin{align}\begin{aligned}
\hat{\bm{X}} = 
\mathbf{A}\hat{\bm{x}}_0^{-} + 
\mathbf{B}\ols{\bm{U}} + 
\mathbf{B}\mathbf{K}\bm{Z} + 
\mathbf{C} + 
\mathbf{L}{\bm{Y}},
\label{eq:blockMatrixEqWithOutputControl}
\end{aligned}\end{align}
which, since $\E[\bm{Y}] = 0$, $\E[\bm{Z}] = 0$, and $\E[\bm{U}] = \ols{\bm{U}}$, implies
\begin{align}
&
\E[\hat{\bm{X}}] \triangleq \ols{\bm{X}} =
 \mathbf{A}\ols{\bm{x}}_0 + \mathbf{B}\ols{\bm{U}} + \mathbf{C}.
\label{eq:blockStateMean}
\end{align}

From $\bm{z}_0 = \hat{\bm{x}}_0 - \ols{\bm{x}}_0$ and $\hat{\bm{x}}_0 = \hat{\bm{x}}_0^{-} + L_0 \widetilde{\bm{y}}_0^-$, we have
\begin{align}\begin{aligned}
\bm{Z} 
&= \mathbf{A}(\hat{\bm{x}}_0^{-} - \ols{\bm{x}}_0 + L_0 \widetilde{\bm{y}}_0^-) + \mathbf{L}_Z\bm{Y}\\
&= \mathbf{A}(\hat{\bm{x}}_0^{-} - \ols{\bm{x}}_0) + \mathbf{L}\bm{Y}
\label{eq:z-processBlock2}
\end{aligned}\end{align}
where $\mathbf{A}L_0 \widetilde{\bm{y}}_0^- + \mathbf{L}_Z\bm{Y} = \mathbf{L}\bm{Y} $.
From \cref{eq:z-processBlock2,eq:blockStateMean,eq:blockMatrixEqWithOutputControl},
\begin{align}\begin{aligned}
\hat{\bm{X}} - \ols{\bm{X}} 
&=
(I + \mathbf{B}\mathbf{K}) [\mathbf{A}(\hat{\bm{x}}_0^- - \ols{\bm{x}}_0) + \mathbf{L}{\bm{Y}}]
\label{eq:blockStateDeviation}
\end{aligned}\end{align}

Thus, in addition to the state mean \cref{eq:blockStateMean}, the state estimate covariance, $\hat{\mathbf{P}}$, is calculated as:
\begin{align}\begin{aligned}
\hat{\mathbf{P}}  \triangleq \cov{\hat{\bm{X}}}
=
(I + \mathbf{BK}) \mathbf{S} (I + \mathbf{BK})^\top
\label{eq:blockStateCov}
\end{aligned}\end{align}
where, noting the independency between $\hat{\bm{x}}_0, \widetilde{\bm{x}}_0, \bm{w}_{\mathrm{obs}, k}\forall k $,
\begin{align}\begin{aligned}
\hspace{-5pt}
\mathbf{S} 
\triangleq 
\cov{\mathbf{A}(\hat{\bm{x}}_0^{-} - \ols{\bm{x}}_0) + \mathbf{L}\bm{Y}}
= \mathbf{A}\hat{P}_{0^-}\mathbf{A}^\top + \mathbf{L}\mathbf{P}_Y\mathbf{L}^\top
\hspace{-5pt}
\label{eq:Sdefinition}
\end{aligned}\end{align}
Here, $\mathbf{P}_Y$, the innovation process covariance, is given by
\begin{align}\begin{aligned}
\mathbf{P}_Y \triangleq \cov{{\bm{Y}}} = \mathrm{blkdiag}(P_{\widetilde{y}_0^{-}}, P_{\widetilde{y}_1^{-}}, ..., P_{\widetilde{y}_N^{-}})
\end{aligned}\end{align}
where $\mathrm{blkdiag}(\cdot)$ forms a block diagonal matrix, and $P_{\widetilde{y}_k^{-}}$ is given in \cref{eq:innovationCovariance}.
Using \cref{eq:z-processBlock2}, $\mathbf{U} - \ols{\mathbf{U}} = \mathbf{K}\bm{Z} = \mathbf{K}[\mathbf{A}(\hat{\bm{x}}_0^{-} - \ols{\bm{x}}_0) + \mathbf{L}\bm{Y}] $, the control covariance is derived as:
\begin{align}\begin{aligned}
\mathbf{P}_U \triangleq \cov{{\bm{U}}} 
= \mathbf{KSK}^\top
\label{eq:controlCovariance}
\end{aligned}\end{align}

Now, we are ready to show a key result, given in \cref{theorem:affineStatistics}, to express the statistics of state and control in terms of the decision variables $\ols{\bm{U}} $ and $\mathbf{K}$ in an affine form.

\begin{proposition}
\label{theorem:affineStatistics}
Under the filtered state dynamics \cref{eq:linEstProcess} with the output-feedback control policy \cref{eq:SOClinPolicy}, $\ols{\bm{x}}_k$, $\hat{P}_k^{1/2} $, $P_k^{1/2} $, and ${P}_{u_k}^{1/2}$ are affine in $\ols{\bm{U}} $ and $\mathbf{K}$, given by:
\begin{align}
&\ols{\bm{x}}_k = E_{x_k} (\mathbf{A}\ols{\bm{x}}_0 + \mathbf{B}\ols{\bm{U}} + \mathbf{C}),\ 
\hat{P}_k^{1/2} = E_{x_k} (I + \mathbf{BK}) \mathbf{S}^{1/2},
\nonumber
\\ &
P_k^{1/2} =
\begin{bmatrix}
\hat{P}_k^{1/2} & \widetilde{P}_k^{1/2}
\end{bmatrix}
,\ 
{P}_{u_k}^{1/2} = E_{u_k} \mathbf{K} \mathbf{S}^{1/2}
\label{eq:sqrtCovariance}
\end{align}
where $\mathbf{S}^{1/2}$ is 
given by
$\mathbf{S}^{1/2} = 
\begin{bmatrix}
\mathbf{A}\hat{P}_{0^-}^{1/2} & \mathbf{L}\mathbf{P}_Y^{1/2}
\end{bmatrix}
$.
\end{proposition}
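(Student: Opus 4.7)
The plan is to obtain each of the four expressions by extracting the appropriate block from the already-derived state and control statistics (\ref{eq:blockStateMean}), (\ref{eq:blockStateCov}), and (\ref{eq:controlCovariance}) via the selection matrices $E_{x_k}, E_{u_k}$ from (\ref{eq:extractMatrix}), and then exhibiting a specific matrix square-root factor that is visibly affine in $(\ols{\bm{U}}, \mathbf{K})$. First I would handle the mean: applying $E_{x_k}$ to both sides of (\ref{eq:blockStateMean}) gives $\ols{\bm{x}}_k = E_{x_k}(\mathbf{A}\ols{\bm{x}}_0 + \mathbf{B}\ols{\bm{U}} + \mathbf{C})$, which is affine in $\ols{\bm{U}}$ and independent of $\mathbf{K}$.

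Next I would verify the claimed block factorization of $\mathbf{S}$: from (\ref{eq:Sdefinition}), $\mathbf{S} = \mathbf{A}\hat{P}_{0^-}\mathbf{A}^\top + \mathbf{L}\mathbf{P}_Y\mathbf{L}^\top$, and a direct multiplication of the horizontal block $[\mathbf{A}\hat{P}_{0^-}^{1/2},\ \mathbf{L}\mathbf{P}_Y^{1/2}]$ against its transpose reproduces this sum, so the $\mathbf{S}^{1/2}$ given in the statement is a valid factor of $\mathbf{S}$. Substituting this into (\ref{eq:blockStateCov}) yields $\hat{P}_k = E_{x_k}(I+\mathbf{BK})\mathbf{S}^{1/2}[\mathbf{S}^{1/2}]^\top (I+\mathbf{BK})^\top E_{x_k}^\top$, so $E_{x_k}(I+\mathbf{BK})\mathbf{S}^{1/2}$ is a valid $\hat{P}_k^{1/2}$; an identical argument applied to (\ref{eq:controlCovariance}) gives ${P}_{u_k}^{1/2} = E_{u_k}\mathbf{K}\mathbf{S}^{1/2}$. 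For the full-state covariance $P_k^{1/2}$ I would invoke the orthogonality property of the linear Kalman filter: $\hat{\bm{x}}_k$ and $\widetilde{\bm{x}}_k$ are uncorrelated (and jointly Gaussian, hence independent), so $P_k = \cov{\hat{\bm{x}}_k + \widetilde{\bm{x}}_k} = \hat{P}_k + \widetilde{P}_k$, and the block matrix $[\hat{P}_k^{1/2},\ \widetilde{P}_k^{1/2}]$ is a valid factor; this remains affine in the decision variables because $\widetilde{P}_k^{1/2}$ is a fixed matrix precomputable from (\ref{eq:estimateCovariance}).

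The only real subtlety, which I would flag as the main obstacle, is reconciling the proposition with the notation convention in the preamble that $A^{1/2}$ denotes the \emph{lower-triangular} Cholesky factor: the factors produced above, such as $E_{x_k}(I+\mathbf{BK})\mathbf{S}^{1/2}$, are not triangular in general. The proposition is consistent only if $A^{1/2}$ is interpreted in the looser sense of any matrix $M$ with $MM^\top = A$, which is precisely the property needed for the downstream second-order-cone reformulations of chance constraints. Once this reading is granted, affineness is immediate: $\mathbf{K}$ enters only through the linear factors $\mathbf{BK}$ and $\mathbf{K}\mathbf{S}^{1/2}$, and $\ols{\bm{U}}$ enters only through the mean equation---exactly the structure required to embed these statistics into a convex program.
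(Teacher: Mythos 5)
Your proposal is correct and follows essentially the same route as the paper's (very terse) proof: extract the mean and covariance blocks via $E_{x_k}, E_{u_k}$ from \cref{eq:blockStateMean,eq:blockStateCov,eq:controlCovariance}, exhibit the rectangular factor $\mathbf{S}^{1/2}$ from \cref{eq:Sdefinition}, and use $P_k = \hat{P}_k + \widetilde{P}_k$ with the precomputable $\widetilde{P}_k$ from \cref{eq:estimateCovariance}. Your added details---explicitly verifying $\mathbf{S}^{1/2}[\mathbf{S}^{1/2}]^\top = \mathbf{S}$, invoking the Kalman orthogonality of $\hat{\bm{x}}_k$ and $\widetilde{\bm{x}}_k$, and flagging that $(\cdot)^{1/2}$ must here mean any factor $M$ with $MM^\top = P$ rather than the lower-triangular Cholesky factor of the notation section---are all sound elaborations of steps the paper leaves implicit.
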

\begin{proof}
Combine \cref{eq:extractMatrix,eq:blockStateMean,eq:blockStateCov,eq:controlCovariance,eq:Sdefinition}, and note that
$P_k = \hat{P}_k + \widetilde{P}_k $ and $P_k = P_k^{1/2}[P_k^{1/2}]^\top $, where $\widetilde{P}_k$ is given by \cref{eq:estimateCovariance}.
\end{proof}

\subsection{Convex Problem Formulation}

\cref{theorem:jointChanceConstraint,theorem:hyperplaneUnion,theorem:3Dcc} are useful for convex formulation.

\begin{lemma}
\label{theorem:jointChanceConstraint}
Let $\bm{\xi}\sim \mathcal{N}(\ols{\bm{\xi}}, P_\xi) \in\R^{n_{\xi}} $.
Then a chance constraint 
$\P{\sum_{j} c_j(\bm{\xi}) \leq 0} \geq 1 - \varepsilon$
is implied by
\begin{align}\begin{aligned}
&\P{c_j(\bm{\xi}) \leq 0} \geq 1 - \varepsilon_j,\ \forall j,
\quad 
\sum_{j} \varepsilon_{j} \leq \varepsilon
\label{eq:jointCC}
\end{aligned}\end{align}
\end{lemma}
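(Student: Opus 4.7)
The plan is to prove the implication by the union bound (Boole's inequality), exactly as in the standard joint chance constraint decomposition in the stochastic programming literature. Notably, the Gaussian assumption on $\bm{\xi}$ is not actually needed; the argument goes through for an arbitrary distribution, which will be worth pointing out as a remark since it shows the decomposition is generic and can be reused downstream whenever a single chance constraint is replaced by a collection of simpler ones.

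First I would introduce the complementary events $A_j = \{c_j(\bm{\xi}) > 0\}$ so that the hypothesis $\P{c_j(\bm{\xi}) \leq 0} \geq 1 - \varepsilon_j$ reads $\P{A_j} \leq \varepsilon_j$. Second I would use the elementary inclusion
\begin{equation*}
\bigcap_{j} \{c_j(\bm{\xi}) \leq 0\} \;\subseteq\; \Bigl\{\sum_j c_j(\bm{\xi}) \leq 0\Bigr\},
\end{equation*}
which holds simply because a sum of nonpositive numbers is nonpositive. Taking probabilities of both sides and passing to complements yields
\begin{equation*}
\P{\sum_j c_j(\bm{\xi}) \leq 0} \;\geq\; 1 - \P{\bigcup_j A_j}.
\end{equation*}

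Third I would apply Boole's inequality to bound $\P{\bigcup_j A_j} \leq \sum_j \P{A_j} \leq \sum_j \varepsilon_j$, and finally invoke the risk-budget hypothesis $\sum_j \varepsilon_j \leq \varepsilon$ to conclude $\P{\sum_j c_j(\bm{\xi}) \leq 0} \geq 1 - \varepsilon$, which is the claim.

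There is no real obstacle here; the entire content is Boole's inequality combined with a monotonicity step. The only thing to be slightly careful about is not overclaiming: the decomposition is one-sided (sufficient but not necessary), and distributing the risk budget $\varepsilon$ among the $\varepsilon_j$ is itself a conservative choice that could be optimized jointly with the other decision variables in the downstream convex program. I would flag this conservatism briefly so that readers understand why the statement says "is implied by" rather than "is equivalent to."
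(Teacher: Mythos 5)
Your proof is correct and takes essentially the same approach as the paper: the paper's own proof is a one-line appeal to Boole's inequality (with a citation to Nemirovski and Shapiro), which is precisely the union-bound argument you spell out, with the monotonicity step (a sum of nonpositive terms is nonpositive) made explicit. Your side remarks---that the Gaussian assumption is not needed and that the decomposition is conservative rather than equivalent---are accurate and consistent with the paper's phrasing ``is implied by.''
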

\begin{proof}
Use the Boole's inequality.
See \cite{Nemirovski2006}.
\end{proof}

\begin{lemma}
\label{theorem:hyperplaneUnion}
Let $\bm{\xi}\sim \mathcal{N}(\ols{\bm{\xi}}, P_\xi) \in\R^{n_{\xi}} $ and $\varepsilon\in(0,0.5) $.
Then,
$\P{\bm{a}_j^\top\bm{\xi} + b_j \leq 0,\ \forall j} \geq 1 - \varepsilon$
is implied by
\begin{align}\begin{aligned}
\bm{a}_j^\top\ols{\bm{\xi}} + b_j + m_{\mathcal{N}}\left(\varepsilon_j\right)\sqrt{\bm{a}^\top P_{\xi} \bm{a}}
\leq 0,\ \forall j,
\  
\sum_{j} \varepsilon_{j} \leq \varepsilon
\label{eq:ccHyperplaneConvex}
\end{aligned}\end{align}
$m_{\mathcal{N}}(\varepsilon) = Q_{X\sim\mathcal{N}}(1 - \varepsilon)$ denotes the quantile function of the standard normal distribution, evaluated at probability $(1 - \varepsilon)$.
\end{lemma}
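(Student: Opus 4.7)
The plan is to reduce the joint hyperplane chance constraint to \emph{individual} scalar Gaussian chance constraints via \cref{theorem:jointChanceConstraint}, and then handle each scalar case by standardization of the normal distribution.

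First, I would set $c_j(\bm{\xi}) \triangleq \bm{a}_j^\top \bm{\xi} + b_j$ and rewrite the desired joint constraint $\P{c_j(\bm{\xi}) \leq 0,\,\forall j} \geq 1 - \varepsilon$ through its complementary form: the joint event fails iff $c_j(\bm{\xi}) > 0$ for some $j$. Applying \cref{theorem:jointChanceConstraint} (i.e., a union bound on the $j$ failure events), it suffices to enforce the individual chance constraints $\P{c_j(\bm{\xi}) \leq 0} \geq 1 - \varepsilon_j$ for each $j$, together with the risk-allocation condition $\sum_j \varepsilon_j \leq \varepsilon$.

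Next, I would dispatch each individual constraint by noting that $Y_j \triangleq \bm{a}_j^\top \bm{\xi} + b_j$ is a scalar Gaussian random variable with mean $\mu_j = \bm{a}_j^\top \ols{\bm{\xi}} + b_j$ and variance $\sigma_j^2 = \bm{a}_j^\top P_\xi \bm{a}_j$, since an affine transformation of a Gaussian vector is Gaussian. Standardizing gives $\P{Y_j \leq 0} = \Phi(-\mu_j/\sigma_j)$, where $\Phi$ denotes the standard normal CDF. The requirement $\Phi(-\mu_j/\sigma_j) \geq 1 - \varepsilon_j$ is equivalent, by monotonicity of $\Phi$, to $-\mu_j/\sigma_j \geq \Phi^{-1}(1 - \varepsilon_j) = m_{\mathcal{N}}(\varepsilon_j)$, which after rearrangement reads $\mu_j + m_{\mathcal{N}}(\varepsilon_j)\sigma_j \leq 0$, i.e., exactly the deterministic condition stated in \cref{eq:ccHyperplaneConvex}.

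The restriction $\varepsilon \in (0, 0.5)$ plays the role of ensuring $m_{\mathcal{N}}(\varepsilon_j) > 0$ for any sensible allocation $\varepsilon_j \in (0, \varepsilon)$, so that the constraint indeed tightens with growing covariance, as expected of a safety margin. There is no serious technical obstacle here; the only subtlety worth flagging is that the chain is a one-way implication rather than an equivalence, because the union-bound step in \cref{theorem:jointChanceConstraint} is in general slack (tight only when the component failure events are disjoint). Consequently \cref{eq:ccHyperplaneConvex} is a sufficient but possibly conservative reformulation; crucially, however, it is affine in $\ols{\bm{\xi}}$ and second-order-cone-representable in the factor of $P_\xi$, which is precisely the structure \cref{theorem:affineStatistics} requires for the downstream convex program.
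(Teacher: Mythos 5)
Your proof is correct and takes essentially the same route the paper intends: the paper's one-line proof (``use the property of normal distribution,'' citing \cite{Ono2013}) is exactly the risk-allocation-plus-standardization argument you spell out, with Boole's inequality from \cref{theorem:jointChanceConstraint} handling the intersection over $j$ and the scalar Gaussian quantile computation yielding \cref{eq:ccHyperplaneConvex}. No gaps; your observation that the result is a one-way, conservative implication matches the lemma's ``is implied by'' phrasing.
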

\begin{proof}
Use the property of normal distribution.
See \cite{Ono2013}. 
\end{proof}

\begin{lemma}
\label{theorem:3Dcc}
Let $\bm{\xi}\sim \mathcal{N}(\ols{\bm{\xi}}, P_\xi) \in\R^{n_{\xi}} $, $\varepsilon\in(0,1) $, and $\gamma>0$.
Then,
$\P{\norm{\bm{\xi}}_2 \leq \gamma} \geq 1 - \varepsilon$
is implied by
\begin{align}\begin{aligned}
\norm{\ols{\bm{\xi}}}_2 + m_{\chi^2}\left(\varepsilon,n_\xi\right)\sqrt{\lambda_{\max}(P_{\xi})}
\leq \gamma
\label{eq:ccNormConstConvex}
\end{aligned}\end{align}
$m_{\chi^2}(\varepsilon,n_\xi) = \sqrt{Q_{X\sim \chi^2(n_\xi)}(1 - \varepsilon)}$.
Here, $Q_{X\sim \chi^2(n_\xi)}(1 - \varepsilon)$ denotes the quantile function of the chi-squared distribution with $n_\xi$ degrees of freedom, evaluated at probability $(1 - \varepsilon)$.
\end{lemma}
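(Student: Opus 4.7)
The plan is to establish the implication by chaining three conservative upper bounds: a deterministic triangle-inequality bound that isolates the zero-mean Gaussian part, a deterministic spectral-norm bound that decouples the covariance structure from a standard Gaussian, and a probabilistic bound on the squared norm of a standard Gaussian via the chi-squared quantile. Each step only weakens the event of interest, so feasibility at the end guarantees the original chance constraint.

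\textbf{Step 1 (triangle inequality).} I would write $\bm{\xi} = \ols{\bm{\xi}} + \bm{\eta}$ with $\bm{\eta} \triangleq \bm{\xi} - \ols{\bm{\xi}} \sim \mathcal{N}(0, P_\xi)$. Then $\norm{\bm{\xi}}_2 \leq \norm{\ols{\bm{\xi}}}_2 + \norm{\bm{\eta}}_2$ deterministically, so the event $\{\norm{\bm{\eta}}_2 \leq \gamma - \norm{\ols{\bm{\xi}}}_2\}$ is contained in $\{\norm{\bm{\xi}}_2 \leq \gamma\}$. Hence it suffices to show that the right-hand side of the proposed condition forces $\P{\norm{\bm{\eta}}_2 \leq \gamma - \norm{\ols{\bm{\xi}}}_2} \geq 1 - \varepsilon$; note that the condition in the statement automatically ensures $\gamma - \norm{\ols{\bm{\xi}}}_2 \geq 0$, so the event is nontrivial.

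\textbf{Step 2 (spectral-norm decoupling).} Using the factorization $\bm{\eta} = P_\xi^{1/2}\bm{w}$ with $\bm{w} \sim \mathcal{N}(0, I_{n_\xi})$, the submultiplicativity of the spectral norm gives $\norm{\bm{\eta}}_2 \leq \norm{P_\xi^{1/2}}_2\,\norm{\bm{w}}_2 = \sqrt{\lambda_{\max}(P_\xi)}\,\norm{\bm{w}}_2$, where the equality uses the notation introduced in the paper. Therefore
\begin{align*}
\P{\norm{\bm{\eta}}_2 \leq \gamma - \norm{\ols{\bm{\xi}}}_2} \;\geq\; \P{\sqrt{\lambda_{\max}(P_\xi)}\,\norm{\bm{w}}_2 \leq \gamma - \norm{\ols{\bm{\xi}}}_2}.
\end{align*}

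\textbf{Step 3 (chi-squared quantile).} Since $\norm{\bm{w}}_2^2 \sim \chi^2(n_\xi)$, the definition \cref{eq:defQuantile} of the quantile function yields $\P{\norm{\bm{w}}_2^2 \leq Q_{\chi^2(n_\xi)}(1-\varepsilon)} \geq 1 - \varepsilon$, i.e., $\P{\norm{\bm{w}}_2 \leq m_{\chi^2}(\varepsilon, n_\xi)} \geq 1 - \varepsilon$. Combining with Step 2, if $\sqrt{\lambda_{\max}(P_\xi)}\,m_{\chi^2}(\varepsilon, n_\xi) \leq \gamma - \norm{\ols{\bm{\xi}}}_2$, the lower bound $1 - \varepsilon$ on the probability propagates back through Steps 2 and 1, which is exactly the condition \cref{eq:ccNormConstConvex}.

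The main obstacle I anticipate is not any single calculation but ensuring the monotone, one-directional chain of implications is preserved: each of the three reductions replaces the original event with a subset of it, so an admissible $\bm{\xi}$ for the deterministic condition is automatically admissible for the chance constraint, but the converse direction fails (the bound is conservative). Care is also needed in Step 2 to invoke $\norm{P_\xi^{1/2}}_2 = \sqrt{\lambda_{\max}(P_\xi)}$ exactly as stated in the paper's notation (using the lower-triangular factor with $A = A^{1/2}[A^{1/2}]^\top$), since the identity holds for any square root of $P_\xi$ but the paper has fixed a specific convention.
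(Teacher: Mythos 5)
Your proposal is correct and follows essentially the same route as the paper's own proof: decompose $\bm{\xi} = \ols{\bm{\xi}} + P_\xi^{1/2}\bm{v}$, apply the triangle inequality and the spectral-norm bound $\norm{P_\xi^{1/2}\bm{v}}_2 \leq \norm{P_\xi^{1/2}}_2\norm{\bm{v}}_2$, then invoke $\norm{\bm{v}}_2^2 \sim \chi^2(n_\xi)$ and its quantile to get the sufficient deterministic condition. The only (cosmetic) difference is that you keep the chain in multiplicative form rather than dividing by $\norm{P_\xi^{1/2}}_2$ as the paper does, which incidentally sidesteps the degenerate case $P_\xi = 0$.
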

\if\shortOrFull1 
\begin{proof}[Proof]
Use the triangle inequality.
See \cite{Oguri2024a} for the proof.
\end{proof}
\fi
\if\shortOrFull2 
\begin{proof}[Proof (originally by the author in \cite{Oguri2022f})]
Denoting $\bm{\xi}$ as $\bm{\xi} = \bar{\bm{\xi}} + P_\xi^{1/2}\bm{v} $ where $\bm{v}\sim\mathcal{N}(0,I_{n_\xi}) $, and
applying the triangle inequality, we have $\normfs{\bm{\xi}}_2 \leq \normfs{\bar{\bm{\xi}}}_2 + \normfs{P_\xi^{1/2}\bm{v}}_2 \leq \normfs{\bar{\bm{\xi}}}_2 + \normfs{P_\xi^{1/2}}_2\normfs{\bm{v}}_2 $.
Thus,
$\P{\normfs{\bm{\xi}}_2\leq \gamma} \geq 
\P{\normfs{\bm{v}}_2 \leq (\gamma - \normfs{\bar{\bm{\xi}}}_2)/\normfs{P_\xi^{1/2}}_2 } \cdots $(A).
Noting $\normfs{\bm{v}}_2^2 \sim \chi^2(n_\xi)$, for a deterministic quantity $v_{\max} \geq 0 $, we have
$\P{\normfs{\bm{v}}_2 \leq v_{\max} } =
\P{\normfs{\bm{v}}_2^2 \leq v_{\max}^2 } 
\geq 1 - \varepsilon$, which is equivalent to
$m_{\chi^2}(\varepsilon,n_\xi) = \sqrt{Q_{X\sim \chi^2(n_\xi)}(1 - \varepsilon)} \leq v_{\max} \cdots$(B).
From (A), $\P{\normfs{\bm{\xi}}_2\leq \gamma} \geq 1 - \varepsilon $ is implied by
$\P{\normfs{\bm{v}}_2 \leq (\gamma - \normfs{\bar{\bm{\xi}}}_2)/\normfs{P_\xi^{1/2}}_2 } \geq 1 - \varepsilon$,
which, due to (B), is equivalent to
$m_{\chi^2}(\varepsilon,n_\xi) \leq {(\gamma - \normfs{\bar{\bm{\xi}}}_2)}/{\normfs{P_\xi^{1/2}}_2} \Leftrightarrow $ \cref{eq:ccNormConstConvex}, where note that $\sqrt{\lambda_{\max}(P_{\xi_k})} = \norm{P_{\xi_k}^{1/2}}_2 $.
\end{proof}
\fi

$m_{\mathcal{N}}$ and $m_{\chi^2}$ are straightforward to calculate in modern programming languages.
Use $\mathtt{norminv}(1 - \varepsilon)$ in Matlab and $\mathtt{stats.norm.ppf}(1 - \varepsilon)$ in Python's \texttt{scipy}
for $Q_{X\sim\mathcal{N}}(1 - \varepsilon)$;
$\mathtt{chi2inv}(1 - \varepsilon, n_\xi)$ in Matlab and $\mathtt{chi2.cdf}(1 - \varepsilon, n_\xi)$ in \texttt{scipy}
for $Q_{X\sim \chi^2(n_\xi)}(1 - \varepsilon)$.

\begin{remark}
$m_{\chi^2}$ given in \cref{theorem:3Dcc} provides a tighter approximation than the one proposed in \cite{Ridderhof2020c}.
The values of $m_{\chi^2}$ in \cref{theorem:3Dcc} and \cite{Ridderhof2020c} coincide when $n_\xi=2$, whereas, for $n_\xi>2$, $m_{\chi^2}$ in \cref{theorem:3Dcc} is smaller than \cite{Ridderhof2020c}, hence giving tighter constraints.
\cite{Oguri2022f} reports a quantitative comparison of the two approaches for various $n_\xi$ and $\varepsilon$.
\end{remark}

\subsubsection{Cost function}
\cref{eq:SOCobjective99fuel} in discrete-time is given by
\begin{align}
J = 
\begin{cases}
\sum_{k=0}^{N-1}Q_{X\sim\norm{\bm{u}_k}_2}(p) & (\mathrm{impulsive})
\\
\sum_{k=0}^{N-1}Q_{X\sim\norm{\bm{u}_k}_2}(p) \Delta t_k & (\mathrm{continuous})
\end{cases} 
\label{eq:SOCobjective99fuelDiscrete}
\end{align}
where
$\Delta t_k = t_{k+1} - t_{k}$.
While \cref{eq:SOCobjective99fuelDiscrete} is not easy to calculate, 
\cref{theorem:deltaV99} gives an upper bound of \cref{eq:SOCobjective99fuelDiscrete}.

\begin{lemma}
\label{theorem:deltaV99}
Suppose $\bm{u}_k \sim\mathcal{N}(\ols{\bm{u}}_k, P_{u_k}) $.
Then, $Q_{X\sim\norm{\bm{u}_k}_2}(p)$ in \cref{eq:SOCobjective99fuelDiscrete} is bounded from above as:
\begin{align}\begin{aligned}
Q_{X\sim\norm{\bm{u}_k}_2}(p) \leq \norm{\ols{\bm{u}}_k}_2 + m_{\chi^2}\left(1 - p,n_u\right) \sqrt{\lambda_{\max}(P_{u_k})}
\nonumber
\end{aligned}\end{align}
\end{lemma}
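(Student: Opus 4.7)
The plan is to deduce the bound directly from \cref{theorem:3Dcc} applied to the Gaussian random vector $\bm{u}_k \sim \mathcal{N}(\ols{\bm{u}}_k, P_{u_k})$, with the right identification of the chance level. The key observation is that the quantile function in \cref{eq:defQuantile} is defined as the infimum of values $a$ for which $\P{\norm{\bm{u}_k}_2 \leq a} \geq p$, so it suffices to exhibit one such value $a^*$ matching the claimed right-hand side.

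First I would set $\varepsilon \triangleq 1 - p$ and $\gamma \triangleq \norm{\ols{\bm{u}}_k}_2 + m_{\chi^2}(1 - p, n_u)\sqrt{\lambda_{\max}(P_{u_k})}$. Because $\bm{u}_k$ is Gaussian with mean $\ols{\bm{u}}_k$ and covariance $P_{u_k}$, this choice satisfies the convex condition \cref{eq:ccNormConstConvex} of \cref{theorem:3Dcc} with equality (taking $\bm{\xi} \leftarrow \bm{u}_k$ and $n_\xi \leftarrow n_u$). By the conclusion of that lemma, this implies $\P{\norm{\bm{u}_k}_2 \leq \gamma} \geq 1 - \varepsilon = p$.

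Second, I would invoke the definition of the quantile function \cref{eq:defQuantile}: since $\gamma$ lies in the set $\{a \in \R \mid \P{\norm{\bm{u}_k}_2 \leq a} \geq p\}$, the minimum of that set is no larger than $\gamma$, i.e., $Q_{X\sim\norm{\bm{u}_k}_2}(p) \leq \gamma$, which is exactly the claim. A small remark on existence of the minimum is warranted---the cumulative distribution function of $\norm{\bm{u}_k}_2$ is continuous, so the infimum is attained---but this is not essential because the inequality version of the definition only requires the set to be nonempty.

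I do not anticipate any real obstacle: this lemma is essentially a repackaging of \cref{theorem:3Dcc} into quantile form, and the proof is one short paragraph once the substitutions $\varepsilon \leftarrow 1-p$, $\bm{\xi} \leftarrow \bm{u}_k$, $n_\xi \leftarrow n_u$ are identified. The only subtlety worth flagging is the direction of the implication---\cref{theorem:3Dcc} gives a sufficient (convex inner-approximation) condition for the chance constraint, which is exactly what yields an \emph{upper} bound on the quantile, consistent with the inequality direction in the statement.
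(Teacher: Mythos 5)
Your proof is correct and follows essentially the same route as the paper: both apply \cref{theorem:3Dcc} with the substitutions $\varepsilon \leftarrow 1-p$, $\bm{\xi} \leftarrow \bm{u}_k$, $n_\xi \leftarrow n_u$, and then invoke the definition \cref{eq:defQuantile} of the quantile as a minimum. If anything, your write-up is slightly cleaner: you directly exhibit $\gamma$ equal to the right-hand side as a member of the set $\{a \in \R \mid \P{\norm{\bm{u}_k}_2 \leq a} \geq p\}$, whereas the paper reaches the same conclusion by introducing an auxiliary slack $\delta \geq 0$ and asserting an equivalence.
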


\if\shortOrFull1 
\begin{proof}[Proof]
Leverage \cref{theorem:3Dcc}.
See \cite{Oguri2024a} for the full proof.
\end{proof}
\fi

\if\shortOrFull2 
\begin{proof}
Using
\cref{theorem:3Dcc}, $\P{\norm{\bm{u}_k}_2 \leq a} \geq p$ is implied by
$\norm{\ols{\bm{u}}_k}_2 + m_{\chi^2}\left(1 - p,n_u\right) \sqrt{\lambda_{\max}(P_{u_k})} \leq a $.
Hence, there exists a non-negative scalar $\delta \geq 0 $ that satisfies
\begin{align}\begin{aligned}
&\P{\norm{\bm{u}_k}_2 \leq a} \geq p
\quad\Leftrightarrow \\
&\norm{\ols{\bm{u}}_k}_2 + m_{\chi^2}\left(1 - p,n_u\right) \sqrt{\lambda_{\max}(P_{u_k})} - \delta \leq a 
\end{aligned}\end{align}
which implies
$
\min\{a\in\R \mid \P{\norm{\bm{u}_k}_2 \leq a} \geq p\}
=
\norm{\ols{\bm{u}}_k}_2 + m_{\chi^2}\left(1 - p,n_u\right) \sqrt{\lambda_{\max}(P_{u_k})} - \delta
$, completing the proof. 
\end{proof}
\fi

Applying \cref{theorem:deltaV99} to \cref{eq:SOCobjective99fuelDiscrete},
we have $J \leq J_{\mathrm{ub}}$, where
\begin{align}\begin{aligned}
\hspace{-6pt}
J_{\mathrm{ub}} = 
\begin{cases}
\sum_{k}
[ \norm{\ols{\bm{u}}_k}_2 + m_{\chi^2}\left(0.01,n_u\right) \norm{P_{u_k}^{1/2} }_2 ] 
\\
\sum_{k}
[ \norm{\ols{\bm{u}}_k}_2 + m_{\chi^2}\left(0.01,n_u\right) \norm{P_{u_k}^{1/2} }_2 ]\Delta t_k 
\end{cases}
\label{eq:convexSOCcost}
\end{aligned}\end{align}
Instead of $J$, we minimize $J_{\mathrm{ub}}$ in \cref{eq:convexSOCcost} for tractability.

\subsubsection{Path chance constraints}
Using \cref{theorem:hyperplaneUnion}, it is clear that \cref{eq:stoStateConst} is implied by:
\begin{align}
\hspace{-5pt}
\bm{a}_j^\top \ols{\bm{x}}_k + b_j + m_{\mathcal{N}}(\varepsilon_{x,j}) \norm{\bm{a}^\top P_k^{1/2} }_2
\leq 0,\ \forall j,k(\in\Z_{0}^{N})
\label{eq:stoStateConstDetFormConvex}
\end{align}
with $\sum_{j} \varepsilon_{x,j} \leq \varepsilon_{x}$.
Using \cref{theorem:3Dcc}, it is clear that \cref{eq:stoControlConst,eq:stoStateConstTube} are respectively implied by:
\begin{align}
\hspace{-5pt}
&\norm{H(\ols{\bm{x}}_k - \bm{x}_k^*)}_2 + 
m_{\chi^2}\left(\varepsilon_x,n_h\right)\norm{H P_k^{1/2} }_2
\leq d_{\max}
\label{eq:stoStateConstTubeConvex}
\\
&\norm{\ols{\bm{u}}_k}_2 + m_{\chi^2}\left(\varepsilon_u,n_u\right)\norm{P_{u_k}^{1/2} }_2
\leq u_{\max}
,\ k\in\Z_{0}^{N-1}
\label{eq:stoControlConstDetFormConvex}
\end{align}

Noting that $\Delta \bm{u}_k = \bm{u}_{k+1} - \bm{u}_k = (E_{u_{k+1}} - E_{u_k}) \bm{U} $ and applying \cref{theorem:3Dcc}, \cref{eq:stoControlRateConst} can be expressed as:
\begin{align}\begin{aligned}
&
m_{\chi^2}\left(\varepsilon_u,n_u\right)\norm{(E_{u_{k+1}} - E_{u_k})\mathbf{K}\mathbf{S}^{1/2}}
+
\\
&
\norm{(E_{u_{k+1}} - E_{u_k}) \ols{\bm{U}}}_2
\leq \Delta u_{\max}
,\ k\in\Z_{0}^{N-2}
\label{eq:stoControlRateConstDetFormConvex}
\end{aligned}\end{align}

\subsubsection{Terminal constraints}
Noting \cref{eq:sqrtCovariance}, the terminal mean constraint is equivalent to:
\begin{align}\begin{aligned}
E_{x_N}(\mathbf{A}\ols{\bm{x}}_0 + \mathbf{B}\ols{\bm{U}} + \bm{C}) - \ols{\bm{x}}_f = 0
\label{eq:convexTerminal1}
\end{aligned}\end{align}
Since $P_N = \hat{P}_N + \widetilde{P}_N$, the terminal covariance constraint leads to
$(0 \prec) \hat{P}_N \preceq P_f - \widetilde{P}_N $, which, by noting \cref{eq:sqrtCovariance}, can be equivalently expressed in a convex form as \cite{Okamoto2018a}:
\begin{align}\begin{aligned}
\norm{(P_f - \widetilde{P}_N)^{-1/2}E_{x_N} (I + \mathbf{BK})\mathbf{S}^{1/2} }_2 - 1 \leq 0.
\label{eq:convexTerminal2}
\end{aligned}\end{align}
where $P_f$ is defined by the user and $\widetilde{P}_N$ is given by \cref{eq:estimateCovariance}.

\subsubsection{State-triggered chance constraints}
\cref{eq:stateTriggeredCCApproachCone} is expressed in deterministic form as:
\begin{align}
\text{if }
g_{\mathrm{stc}}(\ols{\bm{x}}_k)<0
,\text{ then }
c_{\mathrm{stc}}(\ols{\bm{x}}_k, P_k^{1/2}) \leq 0
,\ \forall k\in\Z_{0}^{N}
\label{eq:trigerCCconvex}
\end{align}
where 
$g_{\mathrm{stc}}(\cdot) = \norm{H_r\ols{\bm{x}}_k}_2 - r_c$,
and, using \cref{theorem:hyperplaneUnion,theorem:3Dcc,theorem:jointChanceConstraint},
\begin{align}
&c_{\mathrm{stc}}(\cdot) = 
\norm{A_{\mathrm{cone}} H_r \ols{\bm{x}}_k}_2 - \bm{b}_{\mathrm{cone}}^\top H_r \ols{\bm{x}}_k
\label{eq:c_stc}
\\
&+ m_{\chi_2}(\frac{\epsilon_x}{2}, 2)\norm{A_{\mathrm{cone}} H_r P_k^{1/2}}_2
+ m_{\mathcal{N}}(\frac{\epsilon_x}{2})\norm{\bm{b}_{\mathrm{cone}}^\top H_r P_k^{1/2}}_2
\nonumber
\end{align}

Using \cref{eq:stateTriggered}, \cref{eq:trigerCCconvex} is logically equivalent to
\begin{align}\begin{aligned}
\hspace{-5pt}
h_{\mathrm{stc}}(\cdot) =
- \min\{g_{\mathrm{stc}}(\ols{\bm{x}}_k), 0 \}\cdot c_{\mathrm{stc}}(\ols{\bm{x}}_k, P_k^{1/2}) \leq 0
\label{eq:stc_non-convex}
\end{aligned}\end{align}
$\forall k\in\Z_{0}^{N}$, being non-convex because of the multiplication of convex functions.
Hence, we approximate \cref{eq:stc_non-convex} as:
\begin{align}\begin{aligned}
\hspace{-5pt}
h_{\mathrm{stc}}(\cdot)
=
- \min\{g_{\mathrm{stc}}(\ols{\bm{x}}_k^*), 0 \} \cdot
 c_{\mathrm{stc}}(\ols{\bm{x}}_k, P_k^{1/2}) \leq 0
,\forall k
\label{eq:stc_convex}
\end{aligned}\end{align}

\subsection{Convex Chance-Constrained Path Planning Problem}
If the problem does not involve a state-triggered constraint, 
we find a history of chance-constrained control policies by solving \cref{prob:convexSOC}, which is convex as in \cref{theorem:convexSOC}.
\begin{problem}
\label{prob:convexSOC}
Find $\ols{\bm{U}} $ and $\mathbf{K} $ that minimize the cost \cref{eq:convexSOCcost} such that satisfy the path chance constraints \cref{eq:stoControlConstDetFormConvex,eq:stoStateConstDetFormConvex,eq:stoControlRateConstDetFormConvex,eq:stoStateConstTubeConvex}, terminal constraints \cref{eq:convexTerminal1,eq:convexTerminal2}.
\end{problem}

\begin{theorem}
\label{theorem:convexSOC}
\cref{prob:convexSOC} is a convex optimization problem.
\end{theorem}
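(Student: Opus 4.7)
The plan is to show that each ingredient of Problem~\ref{prob:convexSOC}---the objective (\ref{eq:convexSOCcost}), the path inequalities (\ref{eq:stoStateConstDetFormConvex})--(\ref{eq:stoControlRateConstDetFormConvex}), and the terminal conditions (\ref{eq:convexTerminal1})--(\ref{eq:convexTerminal2})---is convex in the decision variables $(\ols{\bm{U}}, \mathbf{K})$. The workhorse is Proposition~\ref{theorem:affineStatistics}, which states that $\ols{\bm{x}}_k$ is affine in $\ols{\bm{U}}$, while $P_k^{1/2}$, $\hat{P}_k^{1/2}$, and $P_{u_k}^{1/2}$ are affine in $\mathbf{K}$. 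Once this linearity is in hand, each remaining expression will be either a composition of a norm (Euclidean or spectral, both convex) with an affine map, or a weighted sum of such terms with non-negative coefficients---hence convex by the standard composition and positive-combination rules.

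For the cost (\ref{eq:convexSOCcost}), I would first note that $\ols{\bm{u}}_k = E_{u_k}\ols{\bm{U}}$, so $\norm{\ols{\bm{u}}_k}_2$ is the norm of an affine function of $\ols{\bm{U}}$. The term $\norm{P_{u_k}^{1/2}}_2$ is the spectral norm of $E_{u_k}\mathbf{K}\mathbf{S}^{1/2}$, which is affine in $\mathbf{K}$; the spectral norm, being a matrix norm, is convex. Since $m_{\chi^2}(0.01,n_u)\ge 0$ (quantile of a non-negative chi-squared variate) and $\Delta t_k > 0$, the weighted sum over $k$ is convex in both the impulsive and continuous cases.

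For the path constraints I would process each in turn. Equation (\ref{eq:stoStateConstDetFormConvex}) consists of an affine piece $\bm{a}_j^\top \ols{\bm{x}}_k + b_j$ plus $m_{\mathcal{N}}(\varepsilon_{x,j})$ times the $\ell_2$-norm of the row vector $\bm{a}_j^\top P_k^{1/2}$, which is affine in $\mathbf{K}$; bounded above by zero, this yields a convex inequality. Equations (\ref{eq:stoStateConstTubeConvex}), (\ref{eq:stoControlConstDetFormConvex}), and (\ref{eq:stoControlRateConstDetFormConvex}) all have the same anatomy: a sum of two Euclidean/spectral norms of affine maps of the decision variables, weighted by non-negative constants ($m_{\mathcal{N}},m_{\chi^2}\ge 0$), bounded above by a deterministic constant---each convex. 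For the terminal conditions, (\ref{eq:convexTerminal1}) is a linear equation in $\ols{\bm{U}}$, while (\ref{eq:convexTerminal2}) is the spectral norm of $(P_f-\widetilde{P}_N)^{-1/2}E_{x_N}(I+\mathbf{BK})\mathbf{S}^{1/2}$, affine in $\mathbf{K}$, bounded above by $1$; this requires the implicit feasibility premise $P_f-\widetilde{P}_N\succ 0$ so that the matrix inverse square root is well defined.

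The main obstacle is really bookkeeping rather than analysis: one must verify that every quantile multiplier $m_{\mathcal{N}}(\cdot)$ and $m_{\chi^2}(\cdot,\cdot)$ lies in the non-negative regime where Lemmas~\ref{theorem:jointChanceConstraint}--\ref{theorem:3Dcc} apply (risk levels in $(0,0.5)$ for $m_{\mathcal{N}}$ and $(0,1)$ for $m_{\chi^2}$), and that the $\sqrt{\lambda_{\max}(\cdot)}$ appearing in Lemma~\ref{theorem:3Dcc} is recognized as the spectral norm of the corresponding matrix square root so that the constraint becomes convex in $\mathbf{K}$ rather than in the covariance. Once these routine checks are discharged, Problem~\ref{prob:convexSOC} reduces to minimizing a convex cost over the intersection of a convex inequality set and an affine set, i.e., a convex program, establishing the claim.
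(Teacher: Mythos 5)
Your proposal is correct and follows essentially the same route as the paper: the paper's proof is simply ``combine Proposition~\ref{theorem:affineStatistics} with \cref{eq:convexSOCcost,eq:stoControlConstDetFormConvex,eq:stoStateConstDetFormConvex,eq:stoControlRateConstDetFormConvex,eq:stoStateConstTubeConvex,eq:convexTerminal1,eq:convexTerminal2},'' i.e., exactly your observation that every cost and constraint term is a Euclidean or spectral norm of an affine function of $(\ols{\bm{U}},\mathbf{K})$ with non-negative weights, hence convex. You merely make explicit the bookkeeping (non-negativity of $m_{\mathcal{N}}$ and $m_{\chi^2}$, the identification $\sqrt{\lambda_{\max}(P)}=\normfs{P^{1/2}}_2$, and the premise $P_f-\widetilde{P}_N\succ 0$) that the paper leaves implicit.
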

\begin{proof}
Combine \cref{theorem:affineStatistics} and \cref{eq:convexSOCcost,eq:stoControlConstDetFormConvex,eq:stoStateConstDetFormConvex,eq:stoControlRateConstDetFormConvex,eq:stoStateConstTubeConvex,eq:convexTerminal1,eq:convexTerminal2}.
\end{proof}

If the problem involves state-triggered constraints, we approximate \cref{eq:stc_non-convex} via \cref{eq:stc_convex} for convex formulation.
To avoid \textit{artificial infeasibility} due to the approximation, we introduce slack variables $\zeta_k \in\R$ and relax \cref{eq:stc_convex} as:
\begin{align}\begin{aligned}
h_{\mathrm{stc}}(\cdot)
=
- \min\{g_{\mathrm{stc}}(\ols{\bm{x}}_k^*), 0 \} \cdot
 c_{\mathrm{stc}}(\ols{\bm{x}}_k, P_k^{1/2}) \leq \zeta_k
\label{eq:stc_convex_relax}
\end{aligned}\end{align}
$\forall k\in\Z_{0}^{N}$, while penalizing the constraint violation in the cost function by introducing a penalty weight $w\in\R$ as:
\begin{align}\begin{aligned}
\hspace{-6pt}
J_{\mathrm{penalty}}(\ols{\bm{U}}, \mathbf{K}, \bm{\zeta}) = 
J_{\mathrm{ub}}(\ols{\bm{U}}, \mathbf{K})
+
w \norm{\bm{\zeta}}_1
\label{eq:convexSOCcostPenalty}
\end{aligned}\end{align}
where $\bm{\zeta} = [\zeta_0, ..., \zeta_N]^\top $.
Control policies with state-triggered constraints are found by iteratively solving \cref{prob:convexSOC_stc}, which is convex due to \cref{theorem:convexSOC_stc};
at every iteration, $g_{\mathrm{stc}}(\ols{\bm{x}}_k^*),\forall k\in\Z_{0}^{N} $ in \cref{eq:stc_convex_relax} are updated by using the previous solution.
The iteration is terminated when the updates in $\ols{\bm{X}} $ and $\ols{\bm{U}} $ become smaller than a tolerance or when the number of iterations reaches a pre-determined number.

It is also possible to take a more sophisticated sequential convex programming (SCP) approach, e.g., \texttt{SCvx} \cite{Mao2016} and \texttt{SCvx*} \cite{Oguri2023b}, which helps ensure the convergence.

\begin{problem}
\label{prob:convexSOC_stc}
Find $\ols{\bm{U}} $, $\mathbf{K} $, $\bm{\zeta}$ that minimize the cost \cref{eq:convexSOCcostPenalty} such that satisfy the path chance constraints \cref{eq:stoControlConstDetFormConvex,eq:stoStateConstDetFormConvex,eq:stoControlRateConstDetFormConvex,eq:stoStateConstTubeConvex}, terminal constraints \cref{eq:convexTerminal1,eq:convexTerminal2}, and state-triggered chance constraint \cref{eq:stc_convex_relax}.
\end{problem}

\begin{theorem}
\label{theorem:convexSOC_stc}
\cref{prob:convexSOC_stc} is a convex optimization problem.
\end{theorem}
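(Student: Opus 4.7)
The plan is to reduce to Theorem~\ref{theorem:convexSOC} and then argue that the two new ingredients in Problem~\ref{prob:convexSOC_stc}---the penalty term in the cost and the relaxed state-triggered constraint \cref{eq:stc_convex_relax}---preserve convexity when adjoined to the already-convex ingredients. So my first step is to observe that all path and terminal constraints are inherited verbatim from Problem~\ref{prob:convexSOC} and are convex in $(\ols{\bm{U}}, \mathbf{K})$ by Theorem~\ref{theorem:convexSOC}; adding a free variable $\bm{\zeta}$ keeps them convex in the enlarged decision space $(\ols{\bm{U}}, \mathbf{K}, \bm{\zeta})$.

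Next I would handle the cost \cref{eq:convexSOCcostPenalty}. It decomposes as $J_{\mathrm{ub}}(\ols{\bm{U}}, \mathbf{K}) + w\norm{\bm{\zeta}}_1$. Since $J_{\mathrm{ub}}$ was shown to be convex in $(\ols{\bm{U}}, \mathbf{K})$ in Theorem~\ref{theorem:convexSOC}, and $\norm{\cdot}_1$ is a convex function of $\bm{\zeta}$ for any weight $w \geq 0$, the sum is convex in the joint variable.

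The main point is therefore the convexity of the relaxed state-triggered constraint \cref{eq:stc_convex_relax}. The key observation is that $\ols{\bm{x}}_k^*$ denotes the previous iterate and is \emph{constant} in the current subproblem; hence the scalar $\alpha_k \triangleq -\min\{g_{\mathrm{stc}}(\ols{\bm{x}}_k^*),0\}$ is a nonnegative constant. It remains to show that $c_{\mathrm{stc}}(\ols{\bm{x}}_k, P_k^{1/2})$ in \cref{eq:c_stc} is convex in $(\ols{\bm{U}}, \mathbf{K})$. By Proposition~\ref{theorem:affineStatistics}, $\ols{\bm{x}}_k$ and $P_k^{1/2}$ are each affine in $(\ols{\bm{U}}, \mathbf{K})$. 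Therefore each term of \cref{eq:c_stc} is a convex function of $(\ols{\bm{U}}, \mathbf{K})$: the Euclidean norm $\norm{A_{\mathrm{cone}} H_r \ols{\bm{x}}_k}_2$ is a composition of a norm with an affine map; the term $-\bm{b}_{\mathrm{cone}}^\top H_r \ols{\bm{x}}_k$ is affine; and the two matrix/vector norms of $A_{\mathrm{cone}} H_r P_k^{1/2}$ and $\bm{b}_{\mathrm{cone}}^\top H_r P_k^{1/2}$ are likewise compositions of a norm with an affine matrix map. Summing yields a convex $c_{\mathrm{stc}}$, and multiplying by the nonnegative constant $\alpha_k$ preserves convexity. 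The constraint then reads (convex in $(\ols{\bm{U}}, \mathbf{K})$) $-\zeta_k \leq 0$, which is jointly convex in $(\ols{\bm{U}}, \mathbf{K}, \bm{\zeta})$ for every $k\in\Z_0^N$.

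The only subtle step---and the one I would write out carefully---is verifying that the trigger coefficient $\alpha_k$ is both constant and nonnegative at each SCP iteration; once that is in hand the rest is just bookkeeping over the convexity-preserving operations (affine precomposition, nonnegative scaling, and summation). Combining these observations with Theorem~\ref{theorem:convexSOC} completes the argument.
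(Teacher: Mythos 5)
Your proposal is correct and follows essentially the same route as the paper: the paper's proof likewise combines Proposition~\ref{theorem:affineStatistics} (affine dependence of $\ols{\bm{x}}_k$, $P_k^{1/2}$, $P_{u_k}^{1/2}$ on $\ols{\bm{U}},\mathbf{K}$) with the norm-of-affine structure of the cost and constraints, and its one explicit remark---``note $\min\{g_{\mathrm{stc}}(\ols{\bm{x}}_k^*), 0 \} \leq 0$ in \cref{eq:stc_convex_relax}''---is exactly your key observation that the trigger coefficient is a nonnegative constant at each SCP iteration, so multiplying the convex $c_{\mathrm{stc}}$ by it preserves convexity. Your write-up simply makes explicit the bookkeeping (inherited constraints, $\ell_1$ penalty convexity) that the paper's terse proof leaves implicit.
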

\begin{proof}
Combine \cref{theorem:affineStatistics} and \cref{eq:convexSOCcost,eq:convexSOCcostPenalty,eq:stoControlConstDetFormConvex,eq:stoStateConstDetFormConvex,eq:stoControlRateConstDetFormConvex,eq:stoStateConstTubeConvex,eq:convexTerminal1,eq:convexTerminal2,eq:stc_convex_relax,eq:c_stc}; note $\min\{g_{\mathrm{stc}}(\ols{\bm{x}}_k^*), 0 \} \leq 0 $ in \cref{eq:stc_convex_relax}.
\end{proof}


\section{Numerical Examples}
\label{sec:example}

\subsection{Safe Autonomous Rendezvous under Uncertainty}

\if\shortOrFull1 
\begin{figure}[tb] 
    \centering
    \vspace{-5pt}
    \subfigure[\label{f:CWH-1} Position 2-D projection: MC result with predicted statistics]{
        \includegraphics[width=0.95\linewidth]{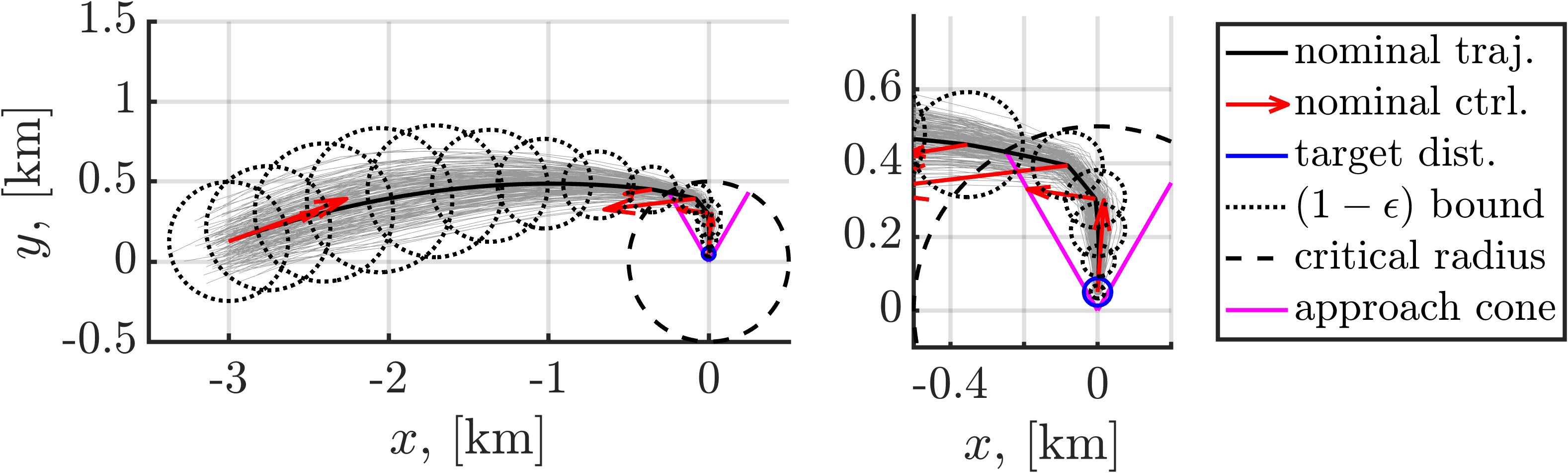}}
    \vspace{-5pt}
    \subfigure[\label{f:CWH-2} MC results with predicted statistics, in (m/s)]{
        \includegraphics[width=0.75\linewidth]{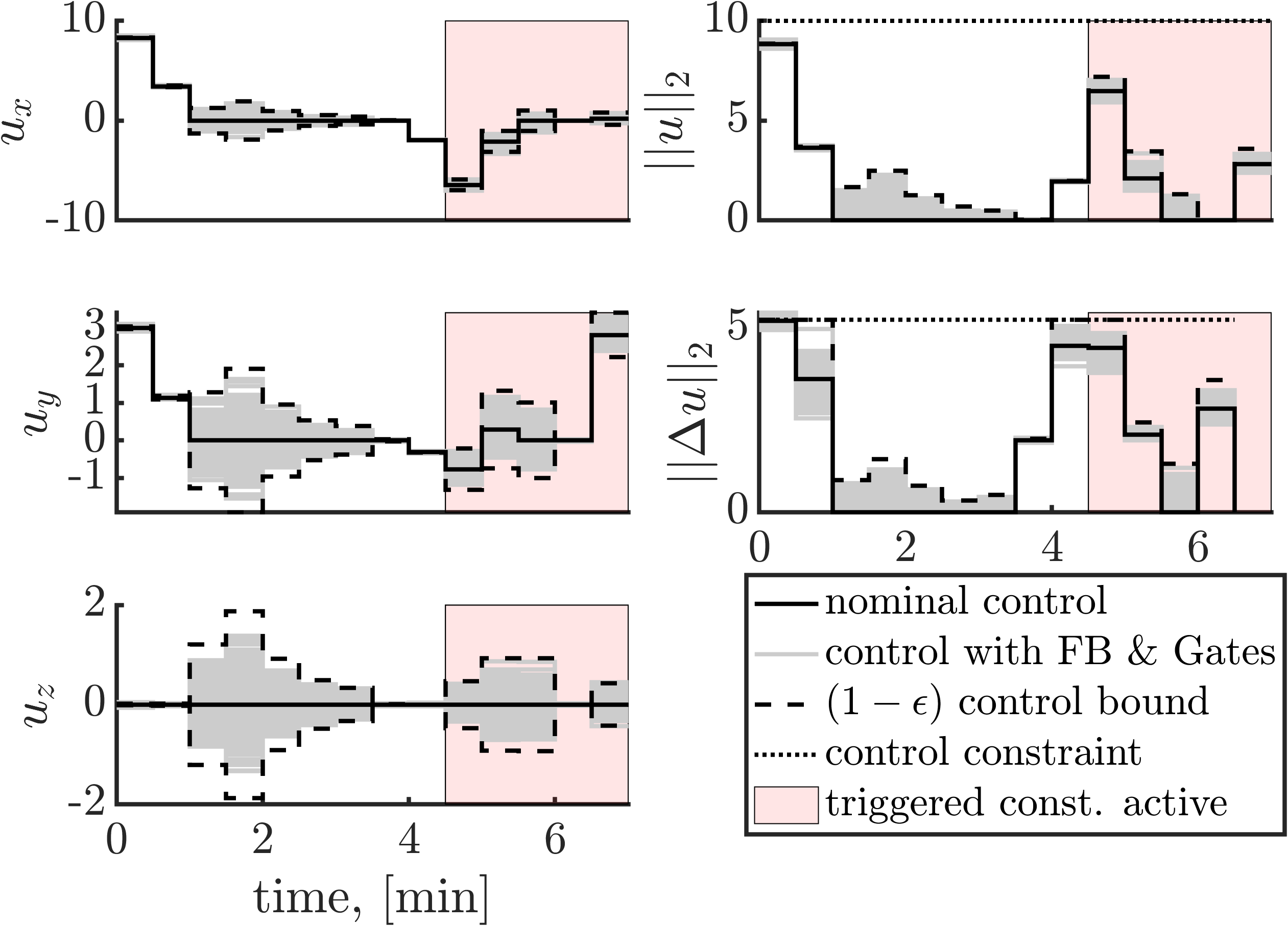}}
    \subfigure[\label{f:CWH-3} MC total $\Delta V$ histogram \& statistics]{
        \includegraphics[width=0.65\linewidth]{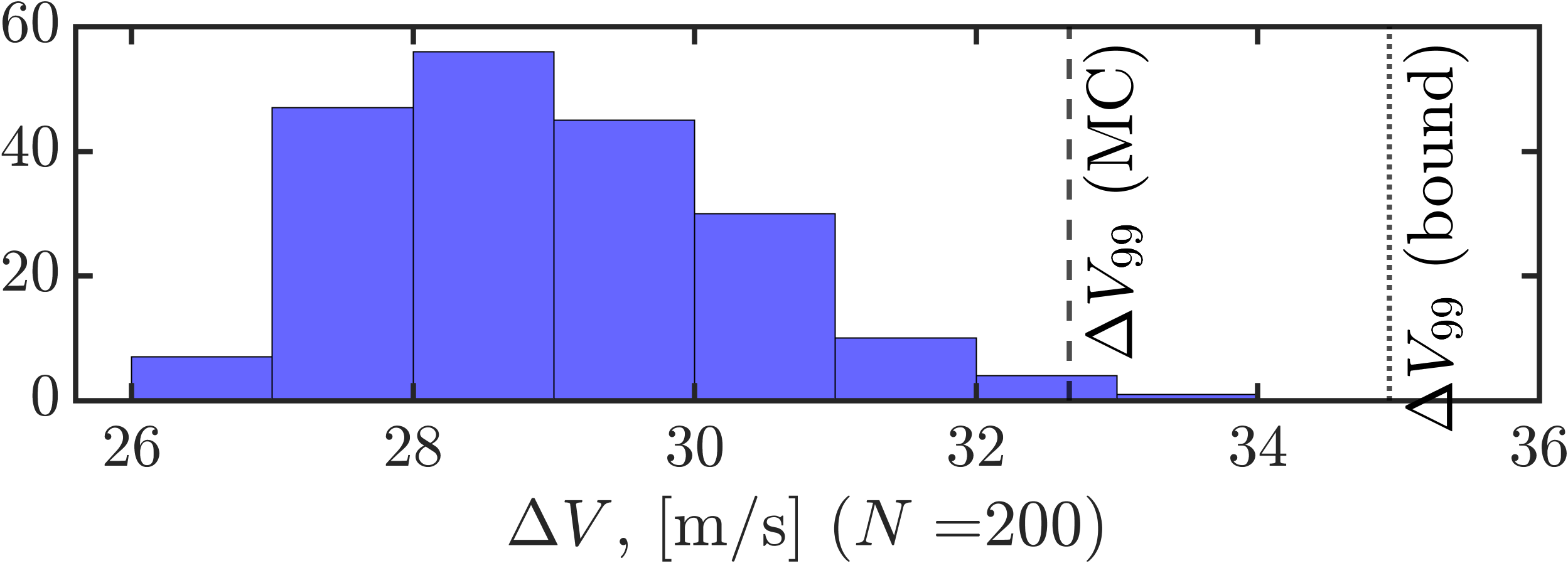}}
    \vspace{-5pt}
    \caption{\label{f:CWH} Safe autonomous rendezvous with state-trigger chance constraints.
    }
    \vspace{-15pt}
\end{figure}
\fi

\if\shortOrFull1 
Consider a safe autonomous rendezvous scenario in proximity of a chief satellite in an Earth orbit under operational uncertainties, with impulsive maneuvers.
As the spacecraft is close to the chief, it is appropriate to use CWH equation to approximate the EoMs.
The constraints considered are \cref{eq:stoControlConst,eq:stoControlRateConst,eq:SOCterminal,eq:stateTriggeredCCApproachCone};
the state-triggered chance constraint models an approach cone constraint that is activated when the spacecraft is closer to the chief than a threshold.
See \cite{Oguri2024a} for the simulation setting detail.
\fi
\if\shortOrFull2 
Consider a safe autonomous rendezvous scenario in proximity of a chief satellite in an Earth orbit under operational uncertainties, with impulsive maneuvers.
As the spacecraft is close to the chief, it is appropriate to use CWH equation to approximate the EoMs.
This scenario features 
chance constraints on 
control magnitude \cref{eq:stoControlConst},
control rate \cref{eq:stoControlRateConst},
terminal distribution \cref{eq:SOCterminal},
and
state-triggered chance constraint \cref{eq:stateTriggeredCCApproachCone}, which models an approach cone constraint that is activated when the spacecraft is closer to the chief than a threshold.
The used parameters are included in Appendix.
\fi

\if\shortOrFull2 
\begin{figure}[tb] 
    \centering
    \subfigure[\label{f:CWH-1} Position 2-D projection: MC result with predicted statistics]{
        \includegraphics[width=\linewidth]{img_approachConeVbar_fig_2.png}}
    \subfigure[\label{f:CWH-2} MC results with predicted statistics, in (m/s)]{
        \includegraphics[width=0.85\linewidth]{img_approachConeVbar_fig_3.png}}
    \subfigure[\label{f:CWH-3} MC total $\Delta V$ histogram \& statistics]{
        \includegraphics[width=0.7\linewidth]{img_approachConeVbar_fig_6.png}}
    \caption{\label{f:CWH} Safe autonomous rendezvous scenario with the approach cone constraint triggered when entering a sphere of critical radius $r_{\mathrm{trigger}}$.
    }
    \vspace{-15pt}
\end{figure}
\fi

\if\shortOrFull1 
\begin{figure}[tb] 
    \centering
    \subfigure[\label{f:NRHO-1} Nonlinear MC trajectories \& statistics ($y$-$z$ projection)]{
        \includegraphics[width=0.8\linewidth]{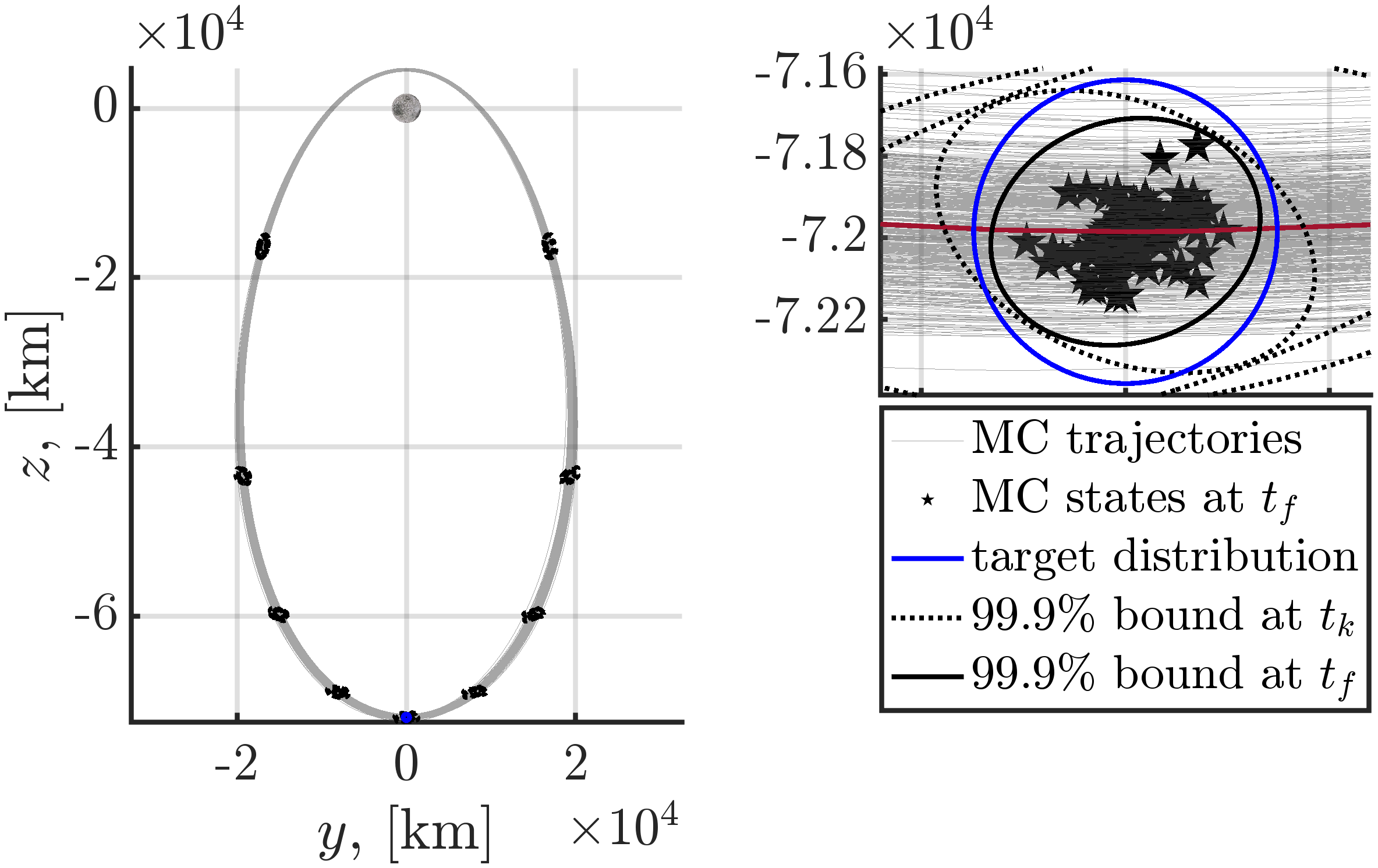}}
    \vspace{-5pt}
    \subfigure[\label{f:NRHO-2} MC position deviation;  discrete-time c.c. successfully met]{
        \includegraphics[width=0.8\linewidth]{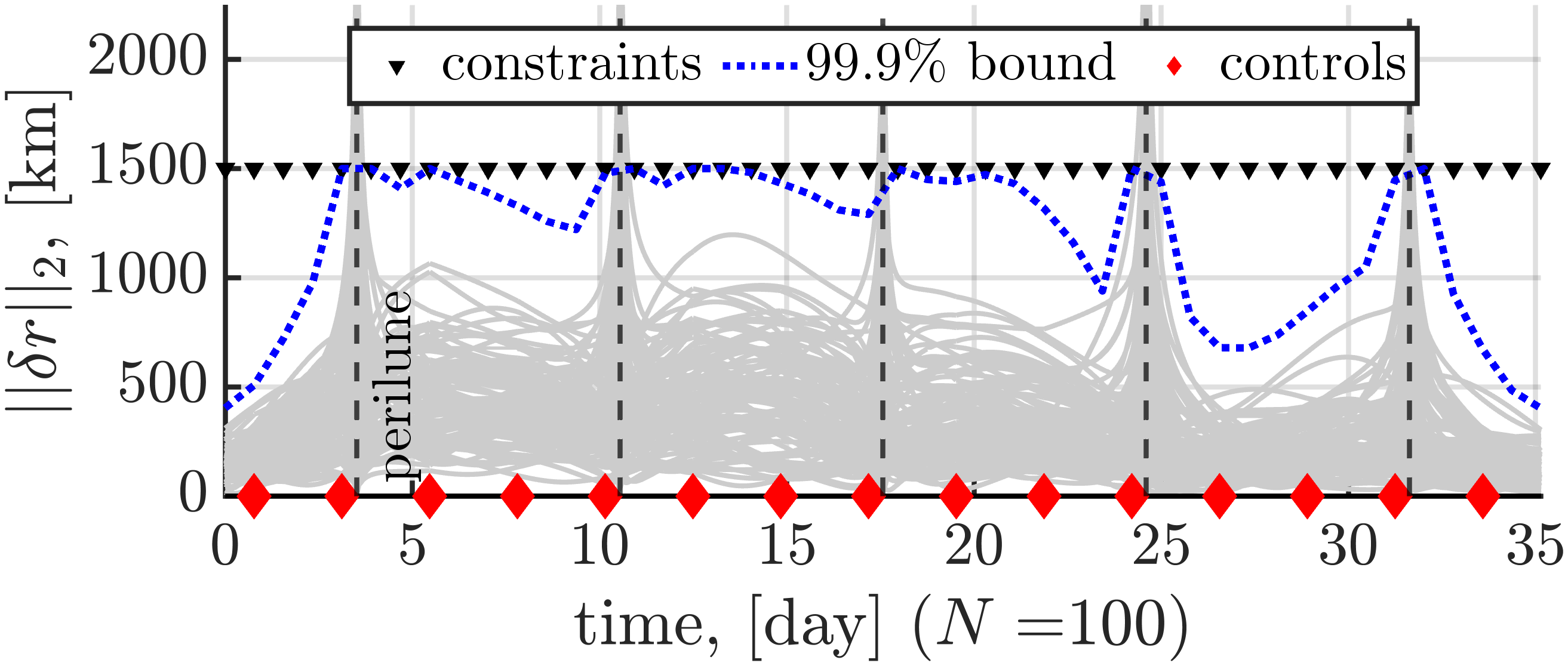}}
    \subfigure[\label{f:NRHO-3} MC total $\Delta V$ histogram \& statistics]{
        \includegraphics[width=0.65\linewidth]{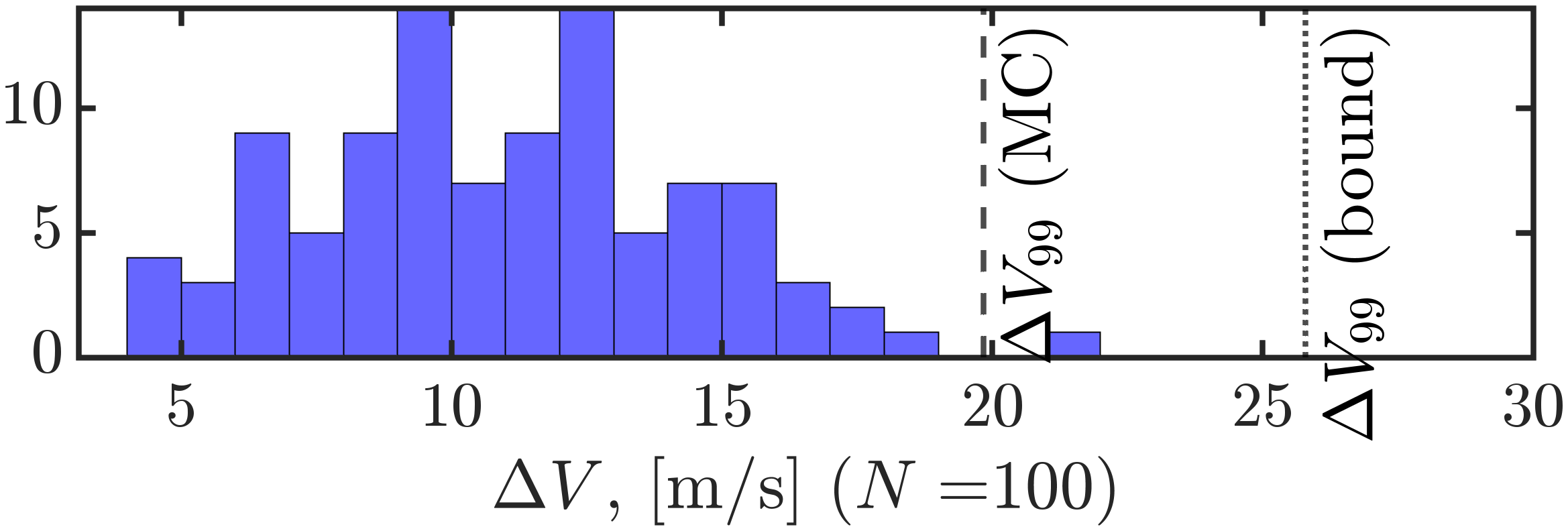}}
    \vspace{-5pt}
    \caption{\label{f:NRHO} Safe autonomous cislunar L2 NRHO station-keeping scenario
    }
    \vspace{-15pt}
\end{figure}
\fi

As the problem involves a state-triggered constraint, \cref{prob:convexSOC_stc} is solved iteratively until the variable update is smaller than a tolerance $\epsilon_{\mathrm{tol}} {=} 10^{-3}$.
Each convex programming took $2.8{-}3.6$ seconds and the iteration was terminated by satisfying $\epsilon_{\mathrm{tol}}$ after 5 iterations.
Monte Carlo (MC) simulation is performed using the designed control policies $\pi_{0:N-1} $.

\cref{f:CWH} summarizes the MC result.
\cref{f:CWH-1} illustrates trajectories projected on the 2-D position space while \cref{f:CWH-2} highlights $\bm{u}_k $, $\norm{\bm{u}_k}_2 $, and $\norm{\Delta \bm{u}_k}_2 $.
These figures demonstrate that the designed policies $\pi_{1:N} $ successfully drive the state to the target distribution while satisfying all the chance constraints.
\cref{f:CWH-3} shows that $\Delta V_{99}$ from MC is indeed upper-bounded by $J_{\mathrm{ub}} $ with some optimality gap (${\sim}2 $ m/s).

\if\shortOrFull2 
\begin{figure}[tb] 
    \centering
    \subfigure[\label{f:NRHO-1} Nonlinear MC trajectories \& predicted statistics ($y$-$z$ projection)]{
        \includegraphics[width=0.9\linewidth]{img_NRHO_OPNAV_fig_3.png}}
    \subfigure[\label{f:NRHO-2} MC position deviation;  discrete-time c.c. successfully met]{
        \includegraphics[width=0.9\linewidth]{img_NRHO_OPNAV_fig_4.png}}
    \subfigure[\label{f:NRHO-3} MC total $\Delta V$ histogram \& statistics]{
        \includegraphics[width=0.7\linewidth]{img_NRHO_OPNAV_fig_7.png}}
    \caption{\label{f:NRHO} Safe autonomous cislunar L2 NRHO station-keeping scenario
    }
    \vspace{-15pt}
\end{figure}
\fi

\subsection{Safe NRHO Station-keeping Planning in Cislunar Space}
\label{sec:NRHOexample}

Next consider safe station-keeping planning on a cislunar NRHO under uncertainty.
For measurement modeling \cref{eq:SOCobservations}, we consider Moon horizon-based optical navigation (OpNav), which is originally proposed in \cite{Christian2016} and applied to various cislunar orbits in \cite{Qi2023a}; we use the same OpNav parameters and filtering architecture as described in \cite{Qi2023a}.

\if\shortOrFull1 
To apply the proposed framework, we linearize the nonlinear CR3BP EoM about a reference trajectory.
We impose \cref{eq:stoStateConstTube} to constrain position deviations from the reference NRHO and \cref{eq:stoControlConst} to constrain $\norm{\bm{u}_k}_2 $.
The scenario begins at the apolune and considers 5 NRHO revolutions, where 9 nodes are placed per orbit, evenly spaced in time, yielding $\Delta t \approx 0.8$ day.
The spacecraft takes a measurement (moon image) at every node, and has maneuver opportunities every 3 nodes.
See \cite{Oguri2024a} for the simulation setting detail.
\fi
\if\shortOrFull2 
To apply the proposed framework, we linearize the nonlinear CR3BP EoM about a reference trajectory.
We impose a tube-like constraint in \cref{eq:stoStateConstTube} to constrain position deviations from the reference NRHO and control magnitude constraint in \cref{eq:stoControlConst}.
The scenario begins at the apolune and considers 5 full NRHO revolutions, where $N=45$, i.e., 9 nodes are placed per orbit, evenly spaced in time, leading to $\Delta t \approx 0.8$ day.
The spacecraft takes a measurement (moon image) at every node, and has trajectory correction maneuver (TCM) opportunities at every 3 nodes, that is, 3 TCM opportunities per orbit.
Tube-like state chance constraints are imposed at every node.
See Appendix for the parameters for NRHO, uncertainty modeling, and constraints.
\fi

Solving this problem does not require SCP as it does not have a state-triggered constraint, and is solved in ${\sim}8.4$ sec, producing about 35-day-worth safe control policy $\pi_k$.

\cref{f:NRHO} summarizes nonlinear MC results, verifying the robustness of $\pi_k$.
The MC simulation nonlinearly evaluates $\bm{x}_k $, $\hat{\bm{x}}_k $, $\widetilde{\bm{y}}_k^{-} $, and $\bm{z}_k$:
the nonlinear CR3BP EoM for $\bm{x}_k $ and OpNav-based extended Kalman filter (EKF) for $\hat{\bm{x}}_k $, $\widetilde{\bm{y}}_k^{-} $, and $\bm{z}_k$;
EKF evaluates \cref{eq:ABc_k,eq:STM,eq:linOBS} along $\hat{\bm{x}}_k$ and $\hat{\bm{u}}_k $, and uses \cref{eq:innovationProcess} instead of \cref{eq:lin_innovation}, which then affects $\bm{z}_k$ through \cref{eq:z-process}.
\cref{f:NRHO-1} shows the MC trajectories projected on $y${-}$z$ plane and highlights the distributional constraint being met at $t_f$.
\cref{f:NRHO-2} shows the satisfaction of the state chance constraints under the optimized policy.%
\footnote{Since chance constraints are imposed in discrete time, constraint violation may occur momentarily in-between constrained epochs. Constraint violations in \cref{f:NRHO-2} correspond to dynamically sensitive perilune passages.
See \cite{Oguri2019b} for continuous-time chance constraints.}
\cref{f:NRHO-3} indicates $\Delta V_{99}$ from MC is upper-bounded by $J_{\mathrm{ub}} $.

\section{Conclusions}
A safe spacecraft path-planning problem under uncertainty is formulated as an output-feedback, chance-constrained optimal control problem.
The presented formulation exploits the Markovian property of the system and incorporates various chance constraints, including state-triggered chance constraints.
The proposed planner designs a history of safe control policies by solving (sequential) convex programming, minimizing 99\% quantile of control cost while ensuring the vehicle safety under uncertainties.
The formulation is validated via safe autonomous rendezvous in proximity operations and safe cislunar NRHO station-keeping planning.

\section*{Acknowledgments}
The implementation of NRHO OpNav in \cref{sec:NRHOexample} is based on the initial development by Daniel Qi in \cite{Qi2023a}.


\if\shortOrFull1 
\fi
\if\shortOrFull2 
\section*{Appendix}
\label{sec:appendix}

\begin{table}[tb]
\centering
\caption{CWH rendezvous scenario uncertainty parameters}
\label{t:paramsCWH1}
\begin{tabular}{lcccccccccccc}
\toprule
Quantity 	& Symbol & Value & Unit \\ \hline
measurement error (pos.) 	& $\sigma_{\mathrm{obs},r}$ 	& 1.0 & m \\
measurement error (vel.) 	& $\sigma_{\mathrm{obs},v}$ 	& 0.01 & m/s \\
initial dispersion (pos.) 	& $\sigma_r$ 					& 100& m \\
initial dispersion (vel.) 	& $\sigma_v$ 					& 1.0& m/s \\
stochastic acceleration  		& $\sigma_a$ 					& 1.0& $\mathrm{mm/s^{3/2}}$ \\
execution error (fixed mag.) & $\sigma_1$ 					& 1.0& cm/s \\
execution error (prop. mag.) & $\sigma_2$ 					& 1.0& \% \\
execution error (fixed point.) & $\sigma_3$ 					& 1.0& cm/s \\
execution error (prop. point.) & $\sigma_4$ 					& 1.0& deg
\\ \bottomrule
\end{tabular}
\end{table}

\begin{table}[tb]
\centering
\caption{CWH rendezvous scenario constraint parameters}
\label{t:paramsCWH2}
\begin{tabular}{lcccccccccccc}
\toprule
Quantity 	& Symbol & Value & Unit \\ \hline
mean pos. at $t_0$ & $\ols{\bm{r}}_0$					& $[-3.0,\, 0.126,\ 0]^\top$ & km \\
mean vel. at $t_0$ & $\ols{\bm{v}}_0$					& $\bm{0}_{3\times1}$ & km/s \\
mean pos. at $t_f$ & $\ols{\bm{r}}_f$					& $[0.0,\, 0.05,\ 0]^\top$ & km \\
mean vel. at $t_f$ & $\ols{\bm{v}}_f$					& $\bm{0}_{3\times1}$ & km/s \\
pos. std. dev. at $t_f$ & $\sigma_{r_f}$ 				& 10.0& m \\
vel. std. dev. at $t_f$ & $\sigma_{v_f}$ 				& 0.1& m/s \\
max $\Delta V$ magnitude & $u_{\max} $ 					& 10.0& m/s \\
max attitude rate & $\omega_{\max} $ 					& 1.0& deg/s \\
approach cone angle & $\theta_{\max} $ 					& 30.0& deg \\
trigger critical radius & $r_{\mathrm{trigger}} $ & 0.5& km \\
rick bound (state) & $\epsilon_x$ & $10^{-3}$ 			& - \\
rick bound (control) & $\epsilon_u$ & $10^{-3}$ 			& - 
\\ \bottomrule
\end{tabular}
\end{table}

\subsection{Safe constrained rendezvous parameters}
We consider $r_0 = 7228~\mathrm{km}$ for the chief's orbit radius (${\sim}850$ km altitude).
The time is discretized with interval $\Delta t = 30~\mathrm{sec} $ with $N=14$.

We model the observation process as 
$\bm{f}_{\mathrm{obs}}(\bm{x}_k) = \bm{x}_k $ and $G_{\mathrm{obs}} = \mathrm{blkdiag}(\sigma_{\mathrm{obs},r}I_3, \sigma_{\mathrm{obs},v}I_3 ) $,
although the formulation can model more realistic measurements too (e.g., range, angle bearing).
See \cref{t:paramsCWH1,t:paramsCWH2} for specific values.

Parameters that define other uncertainties are as follows.
Initial state dispersion:
$\hat{P}_0^{-} =  \mathrm{blkdiag}(\sigma_{r}^2 I_3, \sigma_{v}^2 I_3 ) $.
Initial estimate error:
$\widetilde{P}_{0^-} =  \mathrm{blkdiag}(\sigma_{\mathrm{obs},r}^2 I_3, \sigma_{\mathrm{obs},v}^2 I_3 ) $.
Stochastic acceleration:
$G = [0_{3\times3}, \sigma_{a} I_3]^\top $.
The initial mean state and the target distribution are defined as:
$\ols{\bm{x}}_0 = [\ols{\bm{r}}_0^\top, \ols{\bm{v}}_0^\top]^\top $,
$\ols{\bm{x}}_f = [\ols{\bm{r}}_f^\top, \ols{\bm{v}}_f^\top]^\top $, 
and
$P_f =  \mathrm{blkdiag}(\sigma_{r_f}^2 I_3, \sigma_{v_f}^2 I_3 ) $.

Parameters that define constraints are as follows.
Maximum control rate: 
$\Delta u_{\max} = u_{\max} \omega_{\max} \Delta t $.
Approach cone parameters:
$A_{\mathrm{cone}} = 
\begin{bsmallmatrix}
1 & 0 & 0 \\
0 & 0 & 1
\end{bsmallmatrix} $
and 
$\bm{b}_{\mathrm{cone}} =
[0,  \tan{\theta_{\max}},  0]
^\top$,
allowing the spacecraft to approach the chief from $+y$ direction with the cone angle $2\theta_{\max} $.

\subsection{Safe NRHO station-keeping planning parameters}

The observation process is modeled via Moon horizon-based oOpNav and uses the same parameters and filtering architecture as described in \cite{Qi2023a}.
The parameters used for uncertainty modeling and constraints are listed in \cref{t:paramsNRHO1,t:paramsNRHO2}.
The NRHO initial condition is included in \cref{t:paramsNRHO2}, where the state vector is in the non-dimensional unit, with the characteristic length and time in the Earth-Moon CR3BP being roughly
$l_* = 3.84748 \times 10^{5} $~km and $t_* = 3.75700 \times 10^{5} $~sec.
Note that the initial conditions must be obtained through a differential correction process \cite{Pavlak2013} to design a tight multi-revolution NRHO for the use as a reference trajectory.

\begin{table}[tb]
\centering
\caption{NRHO station-keeping scenario uncertainty parameters}
\label{t:paramsNRHO1}
\begin{tabular}{lcccccccccccc}
\toprule
Quantity 	& Symbol & Value & Unit \\ \hline
initial dispersion (pos.) 	& $\sigma_r$ 					& 100& km \\
initial dispersion (vel.) 	& $\sigma_v$ 					& 1.0& m/s \\
stochastic acceleration  		& $\sigma_a$ 				& $10^{-4}$ & $\mathrm{mm/s^{3/2}}$ \\
execution error (fixed mag.) & $\sigma_1$ 					& 1.0 & cm/s \\
execution error (prop. mag.) & $\sigma_2$ 					& 1.0& \% \\
execution error (fixed point.) & $\sigma_3$ 					& 1.0 & cm/s \\
execution error (prop. point.) & $\sigma_4$ 					& 1.0& deg
\\ \bottomrule
\end{tabular}
\end{table}

\begin{table}[tb]
\centering
\caption{NRHO station-keeping scenario constraint parameters}
\label{t:paramsNRHO2}
\begin{tabular}{lcccccccccccc}
\toprule
Quantity 	& Symbol & Value & Unit \\ \hline
mean pos. at $t_0$ & $\ols{\bm{r}}_0$					& $[1.0300,\, 0.0,\ -0.1871]^\top$ & nd \\
mean vel. at $t_0$ & $\ols{\bm{v}}_0$					& $[0.0,\, -0.1200,\ 0.0]^\top$ & nd \\
mean pos. at $t_f$ & $\ols{\bm{r}}_f$					& $[1.0300,\, 0.0,\ -0.1871]^\top$ & nd \\
mean vel. at $t_f$ & $\ols{\bm{v}}_f$					& $[0.0,\, -0.1200,\ 0.0]^\top$ & nd \\
pos. std. dev. at $t_f$ & $\sigma_{r_f}$ 				& 100.0& km \\
vel. std. dev. at $t_f$ & $\sigma_{v_f}$ 				& 1.0& m/s \\
max $\Delta V$ magnitude & $u_{\max} $ 					& 5.0& m/s \\
max state deviation & $d_{\max} $ 						& 1500& km \\
rick bound (state) & $\epsilon_x$ & $10^{-3}$ 			& - \\
rick bound (control) & $\epsilon_u$ & $10^{-3}$ 			& - 
\\ \bottomrule
\end{tabular}
\end{table}

\fi

\bibliographystyle{ieeetr}

\if\shortOrFull1 
  \input{bbl_short}
\fi
\if\shortOrFull2 

\fi

\end{document}